\documentclass[onecolumn]{autart}    
\pdfminorversion=4
\usepackage{graphicx}          
\graphicspath{{./imgs/}}

\usepackage{amsfonts,amsmath,amssymb}
\usepackage{fancybox}
\usepackage{graphicx}
\graphicspath{{./imgs/}}
\usepackage{ifthen}
\usepackage{bm}
\usepackage{float}
\usepackage{xcolor} 
\usepackage{pifont}
\usepackage{lscape}
\usepackage{tabularx}
\usepackage{multirow}
\usepackage{textcomp} 

\usepackage{hyphenat}
\hyphenation{mo-dels}
\hyphenation{com-plexity}
\hyphenation{nume-rically}
\hyphenation{strengthe-ning}
\hyphenation{tai-ning}
\hyphenation{pre-sented}
\hyphenation{cons-traints}
\hyphenation{bene-fited}
\hyphenation{follo-wing}
\hyphenation{stu-died}
\hyphenation{aiming}
\hyphenation{appro-ximation}
\hyphenation{des-cribed}
\hyphenation{vali-date}
\hyphenation{de-sign}
\hyphenation{dyna-mics}
\hyphenation{res-ponse}
\hyphenation{expo-nential}
\hyphenation{rela-xation}
\hyphenation{Basi-cally}
\hyphenation{electro-stimulator}
\hyphenation{mathe-matical}
\hyphenation{acti-vity}
\hyphenation{diffe-rent}
\hyphenation{opti-mizing}
\hyphenation{gene-ral}
\hyphenation{cons-tant}
\hyphenation{res-trict}
\hyphenation{parame-ters}
\hyphenation{linear}
\hyphenation{ins-tance}
\hyphenation{star-ting}
\hyphenation{mode-led}
\hyphenation{using}
\hyphenation{opti-mal}
\hyphenation{asso-ciated}

\usepackage{placeins}

\usepackage{mathtools}

\def\dd{\mathrm{d}}

\newtheorem{property}[thm]{Property}

\usepackage{stmaryrd}

\sloppy

\begin{document}

\begin{frontmatter}

  \title{Finite Dimensional Approximation to Muscular Response in Force-Fatigue Dynamics using Functional 
  Electrical Stimulation} 

  \thanks[footnoteinfo]{This paper was not presented at any IFAC meeting.
  It benefited from the support
    of the FMJH Program PGMO and from the support of EDF; Thales, Orange and the 
    authors are partially supported by the Latex AMIES.
Corresponding author J.~Rouot. Email. jeremy.rouot@yncrea.fr.
  }

  \author[Dijon]{Toufik Bakir}\ead{toufik.bakir@u-bourgogne.fr},     
  \author[INRIA]{Bernard Bonnard}\ead{bernard.bonnard@u-bourgogne.fr},              
  \author[Dijon,INRIA]{Sandrine Gayrard}\ead{sandrine.gayrard@grenoble-inp.org},  
  \author[Brest]{Jérémy Rouot}\ead{jeremy.rouot@yncrea.fr}  

  \address[Dijon]{Univ. Bourgogne Franche-Comté, ImViA Laboratory EA 7508, 9 avenue Alain Savary, Dijon, France}  
  \address[INRIA]{INRIA, 2004 Route des Lucioles, 06902 Valbonne, France}
  \address[Brest]{L@bisen, Vision-AD Team, Yncrea Ouest, 20 Rue Cuirassé Bretagne, Brest, France}

  \begin{keyword}                          
    Biomechanics $\cdot$ 
    Force-fatigue models $\cdot$ 
    Sampled-data control problem $\cdot$
    Nonlinear input-output approximation $\cdot$
    Predictive-correction methods in optimization.
  \end{keyword}                           

  \begin{abstract}                         
    Recent dynamical models, based on the seminal work of V. Hill, allow to predict 
    the muscular response to functional electrostimulation (FES), in the isometric and non-isometric cases.
    The physical controls are modeled as Dirac pulses and lead to a sampled-data
    control system, sampling corresponding to times of the stimulation, 
    where the output is the muscular force response.
    Such a dynamics is suitable to compute optimized controls aiming to produce a constant force or 
    force strengthening, but is complex for real time applications.
    The objective of this article is to construct a finite dimensional approximation of this
    response to provide fast optimizing schemes, in particular for
    the design of a smart electrostimulator for muscular
    reinforcement or rehabilitation.
    It is an on-going industrial project based on force-fatigue models, validated by experiments.
    Moreover it opens the road to application of optimal control to track a reference trajectory in the joint
    angular variable to produce movement in the non-isometric models.
  \end{abstract}

\end{frontmatter}


\section{Introduction}
Based on the seminal work of V. Hill \cite{Gesztelyi2012}, recent mathematical models 
(validated by experiments) allow to predict the force response
to external stimulation.
They are presented and discussed in details in \cite{wilson2011} in the non-fatigue isometric case.
They were extended in particular by Ding et al. \cite{Ding2000,Ding2002,Ding2002b} to take 
into account the muscular fatigue due to a long stimulation period
and later in \cite{Marion2013} to analyze the joint angular variable response, in the 
non-isometric case aiming to produce movements.
Such models contain two basic nonlinearities to model the complexity of the dynamics.
Fist of all, the ionic conduction and the nonlinear effect of successive 
pulses on the Ca$^{2+}$-concentration. 
Second, the nonlinear dynamics relating the
muscular force response to such concentration, 
modeled by the Michaelis-Menten-Hill functions \cite{menten1913}.

For each train of pulses, due to digital constraints, only a finite number of pulses can be applied 
and from the optimal control point of view, the problem fits into the frame of optimal sampled-data control
problems, studied in particular in \cite{Bourdin2016} to derive Pontryagin necessary conditions.
They can be analyzed 
to determine optimized train pulses and compared with direct optimizing schemes. 
A previous series of articles described the optimal control problems related to track a reference force
or force strengthening, the control being either the interpulse $I_i=t_i-t_{i-1}$ between two 
successive pulses or the amplitude of each pulse.
In particular, model predictive control (MPC) method is presented in \cite{Bakir2019} aiming the use 
of online optimized closed loop control in the applications using force-fatigue model, as suggested in \cite{Doll2015}.
Direct methods vs indirect methods based on Pontryagin type necessary conditions are discussed and numerically
implemented in \cite{Bakir2020} for the isometric case or in \cite{bonnard2020} for the non-isometric case.

The conclusion of aforementioned articles is that the nonlinear dynamics is computationally expensive in the numerical
integration procedure and a challenging task is to reduce this time for real time computation in the 
applications.
This article is motivated by the design of a smart electrostimulator, where the Ding et al. model
is used to adjust automatically the frequency and the amplitude of the stimulations and to compute the 
sequence of stimulations and rest periods adapted to the task of the training program, e.g. endurance
program or force strengthening program.
The objective of this article being to bypass the computational difficulty by constructing 
a finite dimensional approximation of the force response,
depending upon the parameters of each individual, which can be online estimated, aiming a 
real time computation of the optimized amplitudes and times, for each training program.
Note that this approximation has been coded and the application scheme to the smart 
electrostimulator is briefly presented in the final section.

The article is organized as follows.
In section \ref{sec2}, the mathematical model called the Ding et al. model \cite{Ding2000,Ding2002,Ding2002b}
is presented and the main properties of the dynamics are described, reflecting the features of the 
muscular activity. Hence our analysis can be applied to different models discussed in \cite{wilson2011}.
The section \ref{sec3} presents the optimization problems, in relation with muscular dynamics and oriented towards
the design of a smart electrostimulator, where each training program must be translated into an
optimization problem.
Section \ref{sec4} is the technical contribution of this article, 
that is the construction of the approximation for real time computation.
In section \ref{sec5}, we present some numerical simulations aiming to validate the approximation
and the optimizing scheme.
In the final section \ref{sec6}, we outline the application to the design of the smart electrostimulator.
It is based on a nonlinear output tracking \cite{hirschorn1987,hirschorn1988,Isidori1989} 
as a general theoretical frame and is applied to produce a constant force in our situation, but it can
be extended to the non-isometric case to obtain any reference force.
The conclusion indicates directions to complete our analysis, related to online parameters estimation
of the problems \cite{Stein2013,wilson2011} and MPC-methods \cite{Richalet1993,Boyd2010} suitable for 
practical applications.

\section{Mathematical model and main properties} \label{sec2}

We present the Ding et al. model force-fatigue model \cite{Ding2000,Ding2002,Ding2002b}, 
extension of the original Hill model \cite{Gesztelyi2012}.

\subsection{Ding et al. force-fatigue model \cite{Ding2000,Ding2002,Ding2002b}}

The FES input $u$ over a pulse train $[0,T]$ is given by
\begin{equation}
  u(t) = \sum_{i=0}^n \eta_i \delta(t-t_i), \quad t\in [0,T],
  \label{eq:dirac-peigne}
\end{equation}
where $0=t_0<t_1<\dots< t_n<T$ are the impulsion times with $n\in \mathbb{N}$ being fixed and
$\eta_i$ being the amplitudes of each pulses, which are convexified by taking $\eta_i\in [0,1]$,
$\delta(t-t_i)$ denoting the Dirac function at time $t_i$.

Such physical control will provide the FES-signal denoted by $E(t)$, which drives the force response
using electrical conduction and its dynamics is given by 
\begin{equation}
  \dot E(t) + \frac{E(t)}{\tau_c} = \frac{1}{\tau_c} \sum_{i=0}^n R_i \eta_i \delta(t-t_i), \ a.e.\ t\in [0,T],
  \label{eq:Edot}
\end{equation}
with $E(0)=0$, depending upon the time response $\tau_c$ and the scaling function
$R_i$ defined by
\[R_i = \left\{
    \begin{array}{ll}
      1  & \text{ if } i=0
      \\
      1 + (\bar R-1)\, e^{-(t_i - t_{i-1})/\tau_c} & \text{ otherwise },
    \end{array}
\right.\]
which codes the memory effect of successive muscle contractions and is associated to {\it tetanus} \cite{wilson2011}.

The first result is: 
\begin{lem}
  Integrating \eqref{eq:Edot}, one gets 
  \begin{equation*}
    E(t) = \frac{1}{\tau_c} \sum_{i=0}^n R_i e^{-\frac{t-t_i}{\tau_c}} \eta_i H(t-t_i),
    \label{eq:E}
  \end{equation*}
  where $H$ is the Heaviside function and the FES signal depends upon two parameters $(\tau_c,\bar R)$.
\end{lem}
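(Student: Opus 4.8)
The plan is to treat \eqref{eq:Edot} as a scalar linear first-order equation with an impulsive (distributional) source and to integrate it by the classical integrating-factor method. Since the coefficient of $E$ is the constant $1/\tau_c$, the integrating factor is $\mu(t)=e^{t/\tau_c}$. Multiplying \eqref{eq:Edot} by $\mu$ turns the left-hand side into an exact derivative, namely $\frac{\mathrm d}{\mathrm d t}\bigl(e^{t/\tau_c}E(t)\bigr)=\frac{1}{\tau_c}\,e^{t/\tau_c}\sum_{i=0}^n R_i\eta_i\,\delta(t-t_i)$.

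First I would integrate this identity over $[0,t]$ and use the initial condition $E(0)=0$, which removes the boundary term at the origin and gives $e^{t/\tau_c}E(t)=\frac{1}{\tau_c}\int_0^t e^{s/\tau_c}\sum_{i=0}^n R_i\eta_i\,\delta(s-t_i)\,\mathrm d s$. The sifting property of the Dirac measure then evaluates each term of the sum at $s=t_i$, contributing $R_i\eta_i\,e^{t_i/\tau_c}$ precisely when $t_i\le t$ and nothing otherwise; this \emph{switching-on} of the $i$-th term is exactly encoded by the Heaviside factor $H(t-t_i)$. Dividing by $e^{t/\tau_c}$ and regrouping the exponentials as $e^{-(t-t_i)/\tau_c}$ yields the announced formula. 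The stated dependence on $(\tau_c,\bar R)$ follows at once, since $\tau_c$ appears explicitly and each $R_i$ is, by definition, a function of $\tau_c$ and $\bar R$ through the interpulse gaps $t_i-t_{i-1}$.

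The step requiring the most care is the meaning of this integration, because the impulsive source forces $E$ to be discontinuous at the stimulation times, so \eqref{eq:Edot} holds only almost everywhere and must be read in the Carathéodory / distributional sense rather than classically. To make this rigorous I would, as an equivalent route, solve the equation piecewise: on each open interval $(t_{i-1},t_i)$ the source vanishes and $E$ obeys the homogeneous law $\dot E=-E/\tau_c$, while integrating \eqref{eq:Edot} across an arbitrarily small neighbourhood of $t_i$ produces the jump relation $E(t_i^+)-E(t_i^-)=\tfrac{1}{\tau_c}R_i\eta_i$. A straightforward induction on $i$, starting from $E(0)=0$, then reconstructs the same expression and confirms that the compact distributional formula and the piecewise solution coincide. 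The only convention to fix is the value of $H$ at the origin, which affects $E$ only on the measure-zero set $\{t_1,\dots,t_n\}$ and is immaterial for the force response computed downstream.
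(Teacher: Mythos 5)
Your proof is correct: the paper gives no explicit proof of this lemma (it treats the formula as immediate from ``integrating'' the linear impulsive equation \eqref{eq:Edot}), and your integrating-factor computation with the sifting property, supplemented by the piecewise/jump-condition verification, is exactly the standard argument that fills in this step. The only point worth flagging is the impulse at $t_0=0$, which sits on the boundary of the integration interval and requires reading the initial condition as $E(0^-)=0$, a convention your remark on the value of $H$ at the origin essentially covers.
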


\begin{defn}
  Consider a control system of the form: $\frac{\dd x}{\dd t}=f(x,u)$ where $x\in \mathbb{R}^n$,
  $u\in U\subset \mathbb{R}^m$.
  It is said {\it permanent} if $u$ is a measurable bounded mapping valued in $U$.
  It is called a sampled-data control system if the set of controls is restricted to the set of 
  piecewise constant mappings $[u_0,u_1,\dots,u_n]$, $u_i\in U$ over a set of times 
  $t_0=0<t_1<\dots <t_n<T$, where $n$ is a fixed integer.
  \label{def:control}
\end{defn}

Our problem can be formulated in the sampled-data control frame.
One can write from \eqref{eq:E}, 
\begin{equation*}
  E(t) = \frac{e^{-t/\tau_c}}{\tau_c}\ \sum_{i=0}^n R_i\eta_i e^{-t_i/\tau_c}\, H(t-t_i)
  = \sum_{i=0}^n u_i(t),
  \label{eq:cn}
\end{equation*}
where $u_i(t)$ is the effect of the pulse $\eta_i \delta(t-t_i)$ on the linear dynamics \eqref{eq:Edot}.
One introduces the following.

\begin{defn}
  For each $i$ in $\{0,\dots,n\}$, 
  the restriction of $u_i$ to $[t_i,t_{i+1}]$ is called {\it the head} and the 
  restriction to $[t_{i+1},T]$ is called {\it the tail}.
\end{defn}

Clearly the FES-input is in the generalized frame of sampled data control system, provided 
we take into account the time-dependence and the phenomenon of tetanus.
Observe also that each impulse have an effect on the whole train $[0,T]$.

The FES signal drives the evolution of the electrical conduction according to the linear 
dynamics describing the evolution of $Ca^{2+}$-concentration $c_N$:
\begin{equation}
  \dot c_N(t) + \frac{c_N(t)}{\tau_c} = E(t)
  \label{eq:cNdot}
\end{equation}
and integrating the (resonant) system with $c_N(0)=0$ yields the following:

\begin{prop}
  The concentration is 
  \begin{equation}
    c_N(t) = \frac{1}{\tau_c} \sum_{i=0}^n R_i \eta_i (t-t_i)\,e^{-\frac{t-t_i}{\tau_c}}\, H(t-t_i),
    \label{eq:cN}
  \end{equation}
  which are the superposition of lobes of the form 
  \begin{equation}
    \ell_i(t) = \frac{1}{\tau_c} R_i \eta_i (t-t_i)\,e^{-\frac{t-t_i}{\tau_c}},
    \label{eq:ci}
  \end{equation}
  whose restriction to $[t_i,t_{i+1}]$ forms the head of the corresponding lobe.
  \label{prop:cN}
\end{prop}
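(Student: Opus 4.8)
The plan is to integrate the scalar linear equation \eqref{eq:cNdot} by the integrating factor method, the essential feature being that its right-hand side $E$, supplied by the Lemma, is built from decaying exponentials sharing the \emph{same} time constant $\tau_c$ as the homogeneous part $\dot c_N + c_N/\tau_c$. This coincidence is the ``resonance'' alluded to in the statement, and it is precisely what turns the forcing into a linear factor $(t-t_i)$ rather than a difference of two exponentials.

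Concretely, I would multiply \eqref{eq:cNdot} by $e^{t/\tau_c}$ and recognize the left-hand side as an exact derivative, $\frac{\dd}{\dd t}\!\left(e^{t/\tau_c}c_N\right) = e^{t/\tau_c}E(t)$. Substituting the expression for $E$ from the Lemma, each summand collapses because $e^{t/\tau_c}e^{-(t-t_i)/\tau_c}=e^{t_i/\tau_c}$, leaving a finite sum of scaled Heaviside steps,
\begin{equation*}
  e^{t/\tau_c}E(t) = \frac{1}{\tau_c}\sum_{i=0}^n R_i\eta_i\, e^{t_i/\tau_c}\, H(t-t_i).
\end{equation*}
Integrating from $0$ to $t$ and using $c_N(0)=0$ to discard the boundary term, the only primitive required is $\int_0^t H(s-t_i)\,\dd s = (t-t_i)H(t-t_i)$, valid since $0=t_0\le t_i$; this produces the ramp carrying the factor $(t-t_i)$. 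Multiplying back by $e^{-t/\tau_c}$ and recombining $e^{t_i/\tau_c}e^{-t/\tau_c}=e^{-(t-t_i)/\tau_c}$ yields \eqref{eq:cN} term by term, each summand being the lobe $\ell_i$ of \eqref{eq:ci} whose restriction to $[t_i,t_{i+1}]$ is its head.

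Equivalently, one may invoke superposition: by linearity of \eqref{eq:cNdot} it suffices to treat a single pulse, whose contribution is the causal convolution of the impulse response $e^{-t/\tau_c}H(t)$ with the corresponding term of $E$; the convolution of two identical causal exponentials is $\int_0^t e^{-(t-s)/\tau_c}e^{-s/\tau_c}\,\dd s = t\,e^{-t/\tau_c}$, again exhibiting the resonant linear factor. I expect no genuine obstacle here, the computation being elementary; the only point requiring care is the bookkeeping of the Heaviside functions, ensuring that each lobe switches on exactly at $t_i$ and that the step-to-ramp integration is handled consistently at the switching instants.
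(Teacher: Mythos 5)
Your proof is correct and takes essentially the same route as the paper: the paper's entire proof is the one-line remark ``Apply a time translation to the initial lobe with $R_0=1$,'' which is exactly the superposition/time-translation argument you give in your final paragraph, while your primary integrating-factor computation is simply the explicit, term-by-term version of that same elementary integration of the resonant linear equation. No gaps.
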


\begin{pf}
  Apply  a time translation to the initial lobe with $R_0=1$.
\end{pf}

Introducing the functions
\begin{equation}
  m_1(t) = \frac{c_N(t)}{K_m+c_N(t)},\quad m_2(t) = \frac{1}{\tau_1 + \tau_2\, m_1(t)},
  \label{eq:m1m2}
\end{equation}
where $m_1$ is the {\it Michaelis-Menten-Hill function} \cite{menten1913}, the force response satisfies
the Hill dynamics
\begin{equation}
  \dot F(t) = -m_2(t)\, F(t) + m_1(t) A,
  \label{eq:dotF}
\end{equation}
and where $A,K_m,\tau_1,\tau_2$ being additional parameters and we denote by $\Lambda = 
(\bar R, \tau_c, A,K_m,\tau_1,\tau_2)$ the whole set of parameters. \\
The model can be extended to take into account the fatigue.
Using sensitivity analysis from \cite{bonnard2020}, we shall restrict
our study to the case of the {\it force-fatigue Ding et al. model} with the single 
equation:
\begin{equation}
  \dot A(t) = -\frac{A(t) - A_{rest}}{\tau_{fat}} + \alpha_A \, F(t)
  \label{eq:dotA}
\end{equation}
for all $t\in [0,t_f]$, where $t_f$ is the total time and $A(0)=A_{rest}$
corresponds to the fixed value of $A$ for the non fatigue model.
This leads to introduce additional parameters $\tau_{fat}, \alpha_A$.
Typical parameters values used in this article to validate numeric simulations 
are reported in Table \ref{tb:params} .

\subsection{Mathematical rewriting}
For the previous force-fatigue model and for the sake of the analysis, the model is rewritten
as the control system:
\begin{equation*}
  \dot x(t) = g(x(t)) + b(t)\, \sum_{i=0}^n  G(t_{i-1},t_i)\eta_i\,H(t-t_i)\, \bm{e} 
  \label{eq:dotx}
\end{equation*}
with $x=(x_1,\dots,x_8)^\intercal =(c_N,F,A,\bar R, \tau_c, \tau_1,\tau_2,K_m)^\intercal$
which splits into state variables $(c_N,F,A)$ and fatigue parameters $\Lambda =
(\bar R, \tau_c,\tau_1,\tau_2,K_m)$ satisfying the dynamics 
$
  \dot \Lambda (t) = 0,
$
 the system being integrated with the initial condition $x_0=(0,0,A_{rest}, \Lambda(0))^\intercal$
and
\begin{equation*}
  \begin{array}{ll}
    \bm{e}=(1,0\dots,0)^\intercal, & b(t)=\frac{1}{\tau_c}e^{-t/\tau_c},
    \\
    G(t_{i-1},t_i) = (\bar R-1) e^{t_{i-1}/\tau_c} + e^{t_i/\tau_c},
  \end{array}
\end{equation*}
where $t_{-1}=-\infty, t_0=0$ and $t_{n+1}=T$.

This leads to a control system of the form 
\[
  \dot x(t) = g(x(t)) + b(t) \, \sum_{i=0}^n G(t_{i-1},t_i) \eta_i H(t-t_i)\ \bm{e}
\]
with $x(0)=x_0$.

The variable $\sigma=(t_1,\dots,t_n,\eta_0,\eta_1,\dots,\eta_n)$ denotes the finite dimensional input-space
with the constraints
\begin{equation*}
  \begin{aligned}
    &\eta_i \in [0,1],\ i=0,\dots , n 
    \\
    &0<t_1<\dots< t_n<T, \quad t_i-t_{i-1}\ge I_{\min}, \quad i=1,\dots,n\ ,
  \end{aligned}
\end{equation*}
where $I_{\min}$ is the smallest admissible interpulse.

Moreover the control is observed using the following observation function
\begin{equation}
  y(t) = h(x(t)),
  \label{eq:obs-function}
\end{equation}
and $h:x \mapsto (F,A)$ serves as a direct measure of the muscular force and the fatigue variable.

The following properties are straightforward but crucial in our analysis.
\begin{prop}
  The input-output mapping $\sigma \mapsto y(t)$ is piecewise smooth over $[0,T]$ and smooth
  if $t\neq t_i,\ i=0,\dots, n$ (impulse times).
  \label{prop:ysmooth}
\end{prop}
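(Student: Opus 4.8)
The plan is to read off the regularity of the output $y=(F,A)$ by following the chain of dependencies $\sigma\mapsto c_N\mapsto(m_1,m_2)\mapsto(F,A)$, isolating at each stage the only place where smoothness can be lost, namely the impulse times $t_i$. Throughout, the parameters of $\Lambda$ are treated as smooth constants and the components $t_i,\eta_i$ of $\sigma$ are treated as variables on which one tracks joint smoothness.

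First I would invoke the explicit formula \eqref{eq:cN} of Proposition~\ref{prop:cN}, which writes $c_N$ as a finite sum of lobes $\ell_i(t)\,H(t-t_i)$ with $\ell_i$ given by \eqref{eq:ci}. Each lobe $\ell_i$ is an entire function of $t$ and depends smoothly on $(t_i,\eta_i,\bar R,\tau_c)$, so the sole non-smooth ingredient is the Heaviside factor $H(t-t_i)$. Since $\ell_i(t_i)=0$ — the factor $(t-t_i)$ vanishes — the switching of the $i$-th lobe preserves continuity, so $c_N$ is continuous on $[0,T]$; however $\ell_i'(t_i)=R_i\eta_i/\tau_c$, which is generically nonzero, so $\dot c_N$ jumps at each $t_i$. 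This shows that $c_N$ is piecewise smooth on $[0,T]$, smooth on each open interval $(t_i,t_{i+1})$ (i.e. smooth for $t\neq t_i$), with joint smooth dependence on $\sigma$ there.

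Next I would transfer this regularity through the Michaelis-Menten-Hill functions \eqref{eq:m1m2}. On the physically relevant range $c_N\ge 0$, $K_m>0$ one has $m_1\in[0,1)$ and $\tau_1+\tau_2 m_1\ge\tau_1>0$, so both denominators stay bounded away from zero and $m_1,m_2$ are smooth (indeed analytic) functions of $c_N$. By composition, $m_1(t),m_2(t)$ inherit exactly the regularity of $c_N$: continuous on $[0,T]$, smooth for $t\neq t_i$, and smoothly dependent on $\sigma$.

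Finally I would appeal to the theory of parametrized ordinary differential equations. On each open subinterval $(t_i,t_{i+1})$ the coefficients $m_1,m_2$ of the linear system \eqref{eq:dotF}--\eqref{eq:dotA} are smooth in $(t,\sigma)$, so the smooth-dependence theorem for the flow (with respect to time, initial data and parameters) gives that $(F,A)$ is smooth there, jointly in $(t,\sigma)$. Because $m_1,m_2$ are continuous across each $t_i$, the vector field is continuous, hence the trajectory extends in a $C^1$ manner across $t_i$; using the left-hand value at $t_i$ as the initial condition for the next subinterval propagates $(F,A)$ over all of $[0,T]$, yielding a globally $C^1$ trajectory. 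Since $h$ is the linear projection $x\mapsto(F,A)$, the output $y$ has the same regularity, which is the assertion. The only point requiring care — the main, though mild, obstacle — is the bookkeeping at the junction times: one must confirm that the vanishing of $\ell_i$ at $t_i$ preserves continuity so that the glued flow is well defined, while noting that the kink transmitted to $\dot m_1,\dot m_2$ generically prevents $(F,A)$ from being $C^2$ at $t_i$, which is precisely why smoothness holds only for $t\neq t_i$.
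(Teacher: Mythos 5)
Your proof is correct: the paper itself offers no argument for this proposition (it is listed among the ``straightforward'' properties), and your chain of dependencies $\sigma\mapsto c_N\mapsto(m_1,m_2)\mapsto(F,A)$ is precisely the natural filling-in suggested by the paper's own ingredients --- the lobe decomposition \eqref{eq:cN} with $\ell_i(t_i)=0$ ensuring continuity across each impulse time, the non-vanishing denominators in \eqref{eq:m1m2}, and smooth dependence for the linear time-varying system \eqref{eq:dotF}--\eqref{eq:dotA} glued $C^1$ across the $t_i$. The only point worth keeping explicit is the inductive bookkeeping you already note: the value of $(F,A)$ at each junction $t_i$ depends smoothly on $\sigma$ (evaluation of the smooth flow at an endpoint that is itself a coordinate of $\sigma$), which is what licenses applying the smooth-dependence theorem on the next subinterval.
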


\begin{prop}
  For the non fatigue model, the force response can be integrated up to a time reparameterization as 
  \begin{equation}
    F(s) = \int_0^s e^{u-s} m_3(u)\, \dd u
    \label{eq:Fs}
  \end{equation}
  with 
  \begin{equation}
    m_3(s) = A \frac{m_1(s)}{m_2(s)}, \quad \dd s = m_2(t)\,\dd t.
    \label{eq:m3s}
  \end{equation}
  \label{prop:Fs}
\end{prop}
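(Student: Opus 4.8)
The plan is to read \eqref{eq:dotF} as a scalar linear inhomogeneous ODE in $F$ and to integrate it explicitly after the indicated change of time. First I would note that in the non-fatigue model the amplitude $A$ is held constant (equal to $A_{rest}$), so the only time-dependent coefficients in \eqref{eq:dotF} are $m_1$ and $m_2$, both determined through \eqref{eq:m1m2} by the concentration $c_N$ already given in closed form by Proposition~\ref{prop:cN}.

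Next I would introduce the new time $s$ via $\dd s = m_2(t)\,\dd t$ with $s(0)=0$. The key preliminary observation is that this is a legitimate reparameterization: since $m_2(t)=1/(\tau_1+\tau_2\,m_1(t))$ with $\tau_1,\tau_2>0$ and $m_1\in[0,1)$, one has $m_2(t)>0$, hence $t\mapsto s(t)$ is strictly increasing and invertible, and functions may be consistently re-expressed in the $s$-variable through the inverse map. Applying the chain rule $\dot F=(\dd F/\dd s)\,m_2(t)$ to \eqref{eq:dotF} and dividing by $m_2(t)$ converts the equation into the constant-coefficient form
\[
  \frac{\dd F}{\dd s} + F = A\,\frac{m_1}{m_2} = m_3(s),
\]
which is precisely the definition of $m_3$ in \eqref{eq:m3s} read in the $s$-variable.

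It then remains to solve this linear equation with integrating factor $e^{s}$: from $\frac{\dd}{\dd s}\bigl(e^{s}F\bigr)=e^{s}m_3(s)$ together with the initial condition $F(0)=0$ inherited from $x_0$, integration from $0$ to $s$ yields $e^{s}F(s)=\int_0^s e^{u}m_3(u)\,\dd u$, which is the announced formula \eqref{eq:Fs}. I do not anticipate any genuine obstacle; the computation is routine once the change of time is justified, the only point deserving a word being the positivity of $m_2$ that guarantees $s$ is a valid new time variable.
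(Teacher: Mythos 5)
Your proposal is correct and takes essentially the same route as the paper: reparameterize time via $\dd s = m_2(t)\,\dd t$, rewrite the Hill dynamics \eqref{eq:dotF} as $\frac{\dd F}{\dd s} = m_3(s) - F(s)$, and integrate the resulting linear equation with $F(0)=0$ (the paper invokes the Lagrange formula, which is exactly your integrating-factor computation with $e^{s}$). Your additional remark that $m_2>0$ guarantees the time change is a legitimate reparameterization is a small justification the paper leaves implicit.
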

\begin{pf}
  Hill dynamics \eqref{eq:dotF} is rewritten as 
  \[
    \frac{\dd F}{\dd s} = m_3(s) - F(s)
  \]
  and this linear dynamics can be integrated using Lagrange formula with $F(0)=0$.
  This proves the assertion. $\hfill \qed$
\end{pf}

\section{Optimization problems related to the design of the electrostimulator} \label{sec3}

\subsection{Standard electrostimulators vs smart electrostimulators}

The standard commercial electrostimulators apply a sequence of pulses trains and rest periods,
where on each train $[0,T]$ the user only imposes the amplitude of the pulses trains and the constant frequency
is related to training program, typically low frequency for endurance program and high frequency
for force strengthening program.
Our aim is to introduce optimization problems related to the design of a smart electrostimulator,
which will be discussed in Section \ref{sec6}.

\subsection{Optimization problems}
\label{secOCP}

\subsubsection{The punch program}

In this case, our aim is to optimize the force at the end of the train over each train $[0,T]$.
This leads to:

{\small {\bf OCP1:} $\max_{\sigma} F(T)$.}\\
In this case, the amplitudes can be held at the constant maximal values  $\eta_i=1,\ i=0,\dots,n$ 
and the optimization variables are the impulse times:
\[
  0=t_0<t_1<\dots<t_n<T.
\]
Since one considers a single train, the force model is sufficient.

\subsubsection{The train endurance program}

We consider a single train $[0,T]$ on which the model is the force model and the 
corresponding problem is 

{\small {\bf OCP2:} $\min_{\sigma} \displaystyle \int_0^T |F(t)-F_{ref}|^2 \, \dd t$.}\\
Here, the amplitudes are appended to the impulse times to form the optimization variables
and we use the convexified amplitudes constraints: $\eta_i \in [0,1],\ i=0,\dots,n$.

The force reference has to be adjusted in relation with the user and can be set to
$F_{\max}/k$, where $k$ is a suitable positive number greater than 1 and $F_{\max}$
is deduced from {\bf OCP1}.

\subsubsection{The  endurance program}

We consider an interval $[0,t_f]$, where $t_f$ is the total training period formed
by sequences of stimulation and rest periods. 
In this case, one must use a force-fatigue model and we take into account the constraint
$A\in [A_{rest},A_{rest}/k'']$, where $S=A_{rest}/k''$ corresponds to a {\it fatigue threshold} since,
as reported in \cite{Ding2000}, if the user is exhausted, the force signal is
{\it totally noisy}.
Moreover in the case of exhaustion, a large rest period is required.
This constraint can be penalized as follows

{\small {\bf OCP3:} $\min_{\sigma} \displaystyle \int_0^{t_f} |F(t)-F_{ref}|^2 \, \dd t \, +\,
w_1\, \int_0^{t_f} |A(t)-A_S|^2 \, \dd t$},\\ 
where $A_S$ is related to $S$, while $w_1$ is a weight parameter.

\section{Construction of an integrable model for real time application} \label{sec4}

\subsection{Mathematical analysis of $c_N$}

  A pulses train is defined by a finite sequence of impulse times  $\sigma = (t_i)_{0\le i\le n}$
  such that $t_0<\dots<t_n$ and we extend it to the left by $t_{-1}=-\infty$ and to the right by
  $t_{n+1}=T$.
  The response $c_N$ can be decomposed as a sum of lobes defined as follows.

\begin{defn}
  A lobe at $t_k$ is the representative curve of the function $\ell_k:\mathbb{R}\ni t\mapsto R_k\eta_k\, \frac{t-t_k}{\tau_c}\, e^{-(t-t_k)/\tau_c}\, H(t-t_k)$. 
  \label{def:lobe}
\end{defn}

\begin{property}
  \begin{itemize}
    \item   A lobe at $t_k$ reaches its maximum 
      at $t=t_k+\tau_c$ and is equal to $R_k\eta_k/e$.  It is strictly increasing on $[t_k, t_k+\tau_c]$ and
      strictly decreasing $[t_k+\tau_c, t_{k+1}]$.
    \item  $\ddot \ell_k$ has a unique zero at $t_k + 2\tau_c$ and therefore, 
    $\ell_k$ is concave on $[t_k,t_k+2\tau_c]$ and convex on $[t_k+2\tau_c,t_{k+1}]$.
    \item  $\ell$ defines a density probability function
      and more than $95\%$ of the values lie in $[t_k,t_k+5\tau_c]$.
      For all $t\ge t_{k}+5\tau_c$, $|\ell_k(t)|\le R_k \eta_k\, 5e^{-5}$.
      \label{prop:ell}
  \end{itemize}
\end{property}

\begin{prop}
  Denote for $k=0,\dots,n$, $c_N^k = c_{N_{\mid [t_k,t_{k+1}]}}$ and 
  $\displaystyle \bar c_N^k = \frac{1}{t_{k+1}-t_k}\ \int_{t_k}^{t_{k+1}} c_N(t)\, \dd t$.
  We have
  \begin{equation}
    \begin{aligned}
      & c_N^k= \sum_{i=0}^k R_i \eta_i \frac{t-t_i}{\tau_c} e^{-(t-t_i)/\tau_c},
      \\
      & \bar c_N^k= \frac{1}{t_{k+1}-t_k} \, \sum_{i=0}^k R_i \eta_i
      \left( \chi_i(t_k)-\chi_i(t_{k+1})          \right),
    \end{aligned}
    \label{eq:cnk}
  \end{equation}
  where $\chi_i(t)=
  e^{-(t-t_i)/\tau_c}
  \left(\tau_c + t-t_i\right)$.
  \label{prop:cnk}
\end{prop}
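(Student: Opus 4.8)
The plan is to obtain both formulas directly from the closed form of $c_N$ established in Proposition \ref{prop:cN}, namely \eqref{eq:cN}: the first by restricting the Heaviside factors to the window $[t_k,t_{k+1}]$, the second by integrating the resulting expression term by term. No idea beyond \eqref{eq:cN} is needed; the content is the bookkeeping of which pulses are active on $[t_k,t_{k+1}]$ together with one elementary antiderivative.

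First I would fix $t\in[t_k,t_{k+1}]$ and evaluate each Heaviside factor $H(t-t_i)$ in \eqref{eq:cN}. For $i\le k$ one has $t-t_i\ge t_k-t_i\ge 0$, hence $H(t-t_i)=1$; for $i\ge k+1$ one has $t-t_i\le t_{k+1}-t_i\le 0$, hence $H(t-t_i)=0$ (the boundary case $i=k+1$, $t=t_{k+1}$ carries a factor $t-t_i=0$ and is harmless). Discarding the vanishing terms leaves exactly $c_N^k=\sum_{i=0}^k R_i\eta_i\,\frac{t-t_i}{\tau_c}\,e^{-(t-t_i)/\tau_c}$, which is the first assertion (for $k=n$ this uses the convention $t_{n+1}=T$).

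For the average, I would integrate this finite sum over $[t_k,t_{k+1}]$ and exchange sum and integral, which is free for a finite sum. The one computation to carry out is $\int R_i\eta_i\,\frac{t-t_i}{\tau_c}\,e^{-(t-t_i)/\tau_c}\,\dd t$: the substitution $s=(t-t_i)/\tau_c$ reduces it to $R_i\eta_i\tau_c\int s\,e^{-s}\,\dd s$, and a single integration by parts gives $\int s\,e^{-s}\,\dd s=-(s+1)e^{-s}$. Re-substituting shows the antiderivative is precisely $-\chi_i(t)=-e^{-(t-t_i)/\tau_c}(\tau_c+t-t_i)$. Evaluating between $t_k$ and $t_{k+1}$ yields $\chi_i(t_k)-\chi_i(t_{k+1})$ for the $i$-th term, and dividing by $t_{k+1}-t_k$ produces the stated formula for $\bar c_N^k$.

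I do not expect a genuine obstacle here, as every step is routine. The only place that demands care is matching the antiderivative to the given $\chi_i$, but once the integration by parts is performed the identification with $-\chi_i$ is immediate. The sole subtlety is the endpoint question in the first step, namely whether the pulse at $t_{k+1}$ should be counted; since its linear prefactor $t-t_{k+1}$ vanishes at the only point where its Heaviside factor could switch on, including or excluding it changes nothing.
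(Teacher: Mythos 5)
Your proof is correct and is precisely the intended argument: the paper states Proposition \ref{prop:cnk} without any proof, treating it as an immediate consequence of the lobe decomposition \eqref{eq:cN} of Proposition \ref{prop:cN}, and your two steps (Heaviside bookkeeping to reduce the sum to $i\le k$, then the antiderivative $-\chi_i(t)=-e^{-(t-t_i)/\tau_c}(\tau_c+t-t_i)$ obtained by substitution and integration by parts) are exactly that routine verification, including the correct handling of the boundary pulse at $t_{k+1}$ and of the convention $t_{n+1}=T$. The only cosmetic slip is that the antiderivative of the full $i$-th term is $-R_i\eta_i\,\chi_i(t)$ rather than $-\chi_i(t)$; since the constant factor $R_i\eta_i$ sits outside the difference $\chi_i(t_k)-\chi_i(t_{k+1})$ in the stated formula, this changes nothing.
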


\begin{defn}
  The polynomial-exponential category for (piecewise) smooth functions 
  $[0,T] \mapsto \mathbb{R}$ is the category generated by sums, products of polynomials
  $P(t)$ and exponential mappings to generate exponential-polynomials:
  $\sum_n P_n(t) e^{\lambda_n t}$.
  This category is stable with respect to derivation and integration.
  \label{def:category}
\end{defn}

Using proposition \ref{prop:cN} one has: 
\begin{lem}
  For $t\neq t_i$, $c_N(t)$ is in the polynomial-exponential category. 
  Moreover, the coefficients are linear with respect to $\eta_i$ and polynomial-exponential with respect to $t_i$.
  \label{lem:category}
\end{lem}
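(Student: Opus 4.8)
The plan is to work locally in $t$, exploiting the fact that the only obstruction to smoothness comes from the Heaviside factors $H(t-t_i)$, which are locally constant away from the impulse times. First I would fix $t \neq t_i$ for all $i$, so that $t$ lies in some open interval $(t_k, t_{k+1})$. On this interval every $H(t-t_i)$ is constant, equal to $1$ for $i \le k$ and to $0$ for $i > k$, so the expression \eqref{eq:cN} of Proposition \ref{prop:cN} collapses to the finite sum
\[
  c_N(t) = \frac{1}{\tau_c}\sum_{i=0}^k R_i\,\eta_i\,(t-t_i)\,e^{-(t-t_i)/\tau_c},
\]
which is exactly the head/tail expression $c_N^k$ recorded in Proposition \ref{prop:cnk}. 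This reduces the whole claim to analyzing a single lobe.

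Next I would expand one summand. Writing $e^{-(t-t_i)/\tau_c} = e^{-t/\tau_c}\,e^{t_i/\tau_c}$ separates the $t$-dependence from the $t_i$-dependence, so each lobe becomes
\[
  \Big(\tfrac{R_i\,\eta_i}{\tau_c}\,e^{t_i/\tau_c}\Big)\,(t-t_i)\,e^{-t/\tau_c},
\]
that is, an affine polynomial $P_i(t)=a_i t + b_i$ in $t$ multiplied by the single exponential $e^{-t/\tau_c}$. Hence each lobe is of the form $P_i(t)\,e^{\lambda t}$ with $\lambda=-1/\tau_c$, and since Definition \ref{def:category} asserts that the category is stable under finite sums, the restriction of $c_N$ to $(t_k,t_{k+1})$ lies in the polynomial-exponential category. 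As $k$ is arbitrary and Proposition \ref{prop:ysmooth} already supplies piecewise smoothness, this proves the first assertion for every $t \neq t_i$.

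For the statement on the coefficients I would simply read off the dependence of $a_i$ and $b_i$ on the data. Each summand carries exactly one factor $\eta_i$, so the coefficients are linear (homogeneous of degree one) in the amplitudes; this is immediate. Their dependence on the impulse times is carried by the factor $e^{t_i/\tau_c}$, by the affine term $(t-t_i)$, and by the scaling factor $R_i = 1 + (\bar R-1)\,e^{-(t_i-t_{i-1})/\tau_c}$. Each of these is a product of a polynomial (namely $1$ or $t_i$) and an exponential in the impulse times, so the coefficients are polynomial-exponential in the $t_i$, as claimed.

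The only point requiring care, which I would flag explicitly, is the bookkeeping hidden in $R_i$: because $R_i$ couples $t_i$ with the preceding time $t_{i-1}$ through $e^{-(t_i-t_{i-1})/\tau_c}$, the phrase \emph{polynomial-exponential with respect to $t_i$} should be read as joint polynomial-exponential dependence in the impulse times rather than in $t_i$ taken in isolation. Once this convention is acknowledged, no genuine difficulty remains: the argument is a direct expansion and everything follows from the closure properties of Definition \ref{def:category}.
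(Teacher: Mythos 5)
Your proof is correct and follows exactly the route the paper intends: the paper justifies the lemma simply by invoking the explicit lobe decomposition of Proposition~\ref{prop:cN}, and your argument spells this out --- freezing the Heaviside factors on $(t_k,t_{k+1})$, factoring $e^{-(t-t_i)/\tau_c}=e^{-t/\tau_c}e^{t_i/\tau_c}$ to exhibit each lobe as a polynomial times an exponential, and reading off the linear dependence on $\eta_i$ and the polynomial-exponential dependence on the impulse times (including through $R_i$). Your remark that the $t_i$-dependence must be understood jointly, because $R_i$ couples $t_i$ with $t_{i-1}$, is a sound clarification consistent with the paper's usage.
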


We introduce the notion of $p$-persistent pulses related to the case where $p$th successive lobes have an 
influence on the $(p+1)$th lobe. 
\begin{defn}
  Let $t_1<\dots<t_{p}$ be $p$ successive impulses times of a pulses train satisfying
  for any $i\in \llbracket 1,p \rrbracket$, $t_i\le t_{i-1} + 5$
  and $t_i-t_{i-1}\ge I_{\min}$.
  When such integer $p$ is maximal then the pulses train is said $p$-{\it persistent}.
  \label{def:proper}
\end{defn}

\begin{rem}
  Given an $1$-persistent pulses train  $(t_i)_{1\le i\le n}$ ($n>1$), 
  there exists $j\in \llbracket 1,n\rrbracket$ such that
 $t_j> t_{j-1}+5$. Then, by Property \ref{prop:ell}, $c_N^j$ is well approximated
  by  $t \mapsto (t-t_j)\, e^{-(t-t_j)}$ for $t\in [t_j,t_{j+1}]$ (the factor $R_j^\dagger$ has a negligible effect).
\end{rem}

We define now an approximation of $c_N$ denoted as $\tilde c_N$ that will be used 
to construct an approximation of the force $F$ limiting the number of terms and the error
between $c_N$ and $\tilde c_N$ is analyzed in the following proposition.
\begin{prop}
  Let  $(t_i)_{1\le i\le n}$ be a $p$-persistent pulses train. 
  Denote \[c_N(t)\coloneqq\sum_{i=0}^k R_i^\dagger \eta_i (t-t_i)\, e^{-(t-t_i)}\]
  and define its (lower) approximation by
  \[\tilde c_N(t) \coloneqq \sum_{i=\max(0,k-p+1)}^k R_i^\dagger \eta_i(t-t_i)\, e^{-(t-t_i)}\]
  for $t\in [t_k,t_{k+1}],\ k=0,\dots,n$.\\
  Then, we have: 
  \[\sup_{t\in [t_k,t_{k+1}]} \ c_N(t)-\tilde c_N(t) \le \frac{\bar R}{e}\,  
  \kappa + 5e^{-5}\bar R (k-p-\kappa+1) ,\]
  where $\lceil\cdot \rceil$ is the ceiling function and
  $\kappa=\min\left(p,\left\lceil\frac{5\tau_c}{I_{\min}}\right\rceil\right)$
  is independent of $k$.
  \label{prop:err}
  \end{prop}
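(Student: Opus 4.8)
The plan is to read the stated quantity as the sum of the lobes that the approximation discards. Since $\tilde c_N$ retains exactly the indices from $\max(0,k-p+1)$ to $k$, on $[t_k,t_{k+1}]$ one has
\[
c_N(t)-\tilde c_N(t)=\sum_{i=0}^{k-p} R_i^\dagger\,\eta_i\,(t-t_i)\,e^{-(t-t_i)},
\]
a sum of $k-p+1$ \emph{dropped} lobes (the regime $k<p$ is vacuous, the difference being identically zero). Each summand is nonnegative for $t\ge t_i$, so the supremum is that of a nonnegative quantity and it suffices to bound each dropped lobe uniformly. Using $\eta_i\in[0,1]$ and $R_i^\dagger\le\bar R$, together with Property \ref{prop:ell}, every lobe is bounded by its peak value $\bar R/e$, and once $t-t_i\ge 5\tau_c$ it is bounded by the tail value $5e^{-5}\bar R$.

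Next I would split the dropped indices according to their distance from the evaluation point. Fix $t\in[t_k,t_{k+1}]$; since $t_i\le t_k\le t$ for every dropped index, the sign of $t-t_i-5\tau_c$ sorts the lobes into a \emph{near} family (those with $t_i\in(t-5\tau_c,t]$) and a \emph{far} family (those with $t-t_i\ge 5\tau_c$). Bounding the near lobes by $\bar R/e$ and the far lobes by $5e^{-5}\bar R$ gives
\[
c_N(t)-\tilde c_N(t)\le N_{\mathrm{near}}\,\frac{\bar R}{e}+N_{\mathrm{far}}\,5e^{-5}\bar R,
\]
with $N_{\mathrm{near}}+N_{\mathrm{far}}=k-p+1$. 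Because the near coefficient exceeds the far one, the right-hand side is largest when $N_{\mathrm{near}}$ is as large as permitted, so the whole argument reduces to the single combinatorial estimate $N_{\mathrm{near}}\le\kappa$.

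The core of the proof is this count. The near lobes all have source times inside a window $(t-5\tau_c,t]$ of length $5\tau_c$. On one hand, the minimal-interpulse constraint $t_i-t_{i-1}\ge I_{\min}$ forces at most $\lceil 5\tau_c/I_{\min}\rceil$ pulses in such a window. On the other hand, the pulses lying in a common window of this width occupy a contiguous block of indices and satisfy $t_i-t_{i-1}\le 5\tau_c$, hence form a persistent run in the sense of Definition \ref{def:proper}; by maximality of $p$ their number cannot exceed $p$. Combining the two caps yields $N_{\mathrm{near}}\le\min\!\left(p,\lceil 5\tau_c/I_{\min}\rceil\right)=\kappa$, whence $N_{\mathrm{far}}\ge k-p+1-\kappa$ and the announced inequality follows after substitution. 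One checks that the estimate remains a valid upper bound even in the degenerate regime $\kappa>k-p+1$, where it is merely not tight, since replacing far lobes by the larger near bound only inflates the right-hand side.

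The step I expect to be delicate is precisely the near-lobe count: one must argue carefully that the pulses falling in a length-$5\tau_c$ window are necessarily consecutive and chain-connected with gaps $\le 5\tau_c$, so that the maximality clause of the persistence definition applies, and one must be consistent about the ceiling-versus-floor convention when turning the spacing constraint into the bound $\lceil 5\tau_c/I_{\min}\rceil$. A related bookkeeping point is the $\tau_c$-normalization, since the persistence threshold is written as $5$ in Definition \ref{def:proper} while the concentration and tail estimates in Property \ref{prop:ell} are phrased in terms of $5\tau_c$; these must be reconciled before the window argument is applied. The uniform lobe bounds and the nonnegativity are routine consequences of Property \ref{prop:ell}; the only genuine content is showing that the window geometry and the persistence hypothesis jointly cap the number of surviving near lobes by $\kappa$.
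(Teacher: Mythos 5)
Your proof is correct and takes essentially the same route as the paper's: the same identification of $c_N-\tilde c_N$ with the sum of dropped lobes, the same near/far split at distance $5\tau_c$ using the peak bound $\bar R/e$ and the tail bound $5e^{-5}\bar R$ from Property \ref{prop:ell}, and the same count of near lobes by $\kappa=\min\left(p,\left\lceil 5\tau_c/I_{\min}\right\rceil\right)$ via persistence and the minimal interpulse. The details you single out as delicate (the window/persistence maximality argument, the $\tau_c$-normalization, and the degenerate regime $\kappa>k-p+1$) are precisely the points the paper's own proof leaves implicit, so your write-up is, if anything, slightly more careful.
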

\begin{pf}
  For $t\in [t_k,t_{k+1}],\, k=0,\dots,n$, we have:
  \[
    c_N(t) 
  \ge \sum_{i=k-p+1}^k R_i^\dagger \eta_i (t-t_i) e^{-(t-t_i)} =\tilde  c_N(t)
\]
and 
\[
  c_N(t) - \tilde c_N(t) 
  \le \bar R \sum_{i=0}^{k-p}  (t-t_i) e^{-(t-t_i)}.
\]
The number of indices $i\in \{0,\dots,k-2\}$ for which $t-t_i\le 5\tau_c$ is at most
 $\kappa\coloneqq\min\left(p,\left\lceil\frac{5\tau_c}{I_{\min}}\right\rceil\right)$ since
 $(t_i)_i$ is $p$-persistent and $\left\lceil\frac{5\tau_c}{I_{\min}}\right\rceil$ stands for
 the maximum number of impulse times satisfying the constraint $t_i-t_{i-1}\ge I_{\min}$ 
 in an interval of length $5\tau_c$.
\end{pf}

\begin{prop}[Tail approximation of $c_N$]
  Let $q\in \{0,\dots,n\}$.
  Denote the tail of $c_N$ by $c_N^q = c_{N\rvert_{[t_q,T]}}$
  and its average over $[t_q,T]$ by $\bar c_N^q$. We have:
  \begin{equation}
    \begin{aligned}
      \bar c_N^q= \frac{1}{T-t_q} \, \sum_{i=0}^q R_i \eta_i
      \left( \chi_i(t_q)-\chi_i(T) \right)
      +
      \frac{1}{T-t_q} \, \sum_{i=q+1}^n R_i \eta_i
      \left( 1 - \chi_i(T) \right),
    \end{aligned}
    \label{eq:cnq}
  \end{equation}
  where $\chi_i(t)=
  e^{-(t-t_i)}
  \left(1+t-t_i\right)$.
  \label{prop:cnq}
\end{prop}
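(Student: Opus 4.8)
The plan is to start from the closed form of the concentration in Proposition~\ref{prop:cN}, specialized to the reduced time normalization $\tau_c=1$ used throughout Section~\ref{sec4}, so that
\[
c_N(t)=\sum_{i=0}^n R_i\eta_i\,(t-t_i)\,e^{-(t-t_i)}\,H(t-t_i),
\]
then integrate this expression termwise over the tail interval $[t_q,T]$ and divide by its length $T-t_q$. Since the sum is finite, exchanging it with the integral is immediate, and the whole argument reduces to evaluating one elementary integral per lobe while keeping track of the Heaviside factors. This is exactly the tail analogue of the head computation already carried out in Proposition~\ref{prop:cnk}.

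First I would record the antiderivative underlying the function $\chi_i$. A one-line differentiation check shows that $\frac{\dd}{\dd t}\bigl[-(1+t-t_i)\,e^{-(t-t_i)}\bigr]=(t-t_i)\,e^{-(t-t_i)}$, so with $\chi_i(t)=e^{-(t-t_i)}(1+t-t_i)$ one has $\int (t-t_i)\,e^{-(t-t_i)}\,\dd t = -\chi_i(t)+C$. This is the same primitive as in Proposition~\ref{prop:cnk}, now taken at $\tau_c=1$.

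Next comes the only step with genuine content: splitting the index set according to which lobes are active on the tail. For $i\le q$ we have $t_i\le t_q$, hence $H(t-t_i)\equiv 1$ on $[t_q,T]$ and the corresponding lobe contributes $\int_{t_q}^{T}(t-t_i)e^{-(t-t_i)}\,\dd t=\chi_i(t_q)-\chi_i(T)$. For $i\ge q+1$ the firing time $t_i$ lies in $(t_q,T)$, so $H(t-t_i)$ switches on only at $t_i$ and the lobe contributes $\int_{t_i}^{T}(t-t_i)e^{-(t-t_i)}\,\dd t=\chi_i(t_i)-\chi_i(T)$. The small observation that closes the proof is $\chi_i(t_i)=e^{0}(1+0)=1$, which turns this second family of terms into $1-\chi_i(T)$. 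Weighting each contribution by $R_i\eta_i$, summing the two families, and dividing by $T-t_q$ reproduces exactly formula~\eqref{eq:cnq}. (As a consistency check, the boundary index $i=q$ gives $\chi_q(t_q)-\chi_q(T)=1-\chi_q(T)$ since $\chi_q(t_q)=1$, so placing it in either sum is harmless.)

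I do not expect a real obstacle: once the primitive $-\chi_i$ is in hand the computation is routine. The one point demanding care is the bookkeeping with the Heaviside function, namely that the impulses indexed $q+1,\dots,n$ begin contributing only at their own firing times $t_i$ rather than at the lower endpoint $t_q$. This is precisely what forces the decomposition into two separate sums and what produces the constant $1$ appearing in the second one; forgetting it would wrongly replace that $1$ by $\chi_i(t_q)$.
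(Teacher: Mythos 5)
Your proof is correct and coincides with the computation the paper leaves implicit (Proposition~\ref{prop:cnq} is stated without proof, as the direct tail analogue of Proposition~\ref{prop:cnk}): termwise integration of the lobes using the primitive $-\chi_i$, with the index split according to whether a pulse fires before or inside $[t_q,T]$, and the observation $\chi_i(t_i)=1$ producing the constant $1$ in the second sum. Nothing is missing.
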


\subsection{Approximations of $F$}

Integrating \eqref{eq:dotF}, the force with $F(0)=0$ can be written as 
\begin{equation}
  F(t) = A M(t)\, \int_{0}^t M^{-1}(s) m_1(s)\, \dd s, \ t\in [0,T]
  \label{eq:Fvrai}
\end{equation}
where $\displaystyle M(t) = \exp\left(-\int_{0}^t m_2(s)\, \dd s\right)$.

The following properties show that it is natural to approximate $m_1$ and 
$m_2$ by polynomial functions.
\begin{property}
Let $k\in \{0,\dots,n\}$.
  \begin{itemize}
  \item Denote $t^* = \underset{t_\in [t_k,t_{k+1}]}{\text{argmax }}\, c_N(t)$.
  Then,
     $m_1$ (resp. $m_2$)  is strictly increasing (resp. decreasing) on $[t_k,t^*]$ 
     and strictly decreasing (resp. increasing) of $[t^*,t_{k+1}]$.
     \item 
      If $t_{k+1}<t_k+2\tau_c$ then ${m_1}_{\mid [t_k,t_{k+1}]}$ is concave and 
  ${m_2}_{\mid [t_k,t_{k+1}]}$ is convex.
  \end{itemize}
\end{property}

We consider a finer partition of $(t_i)_{1\le i\le n}$ denoted 
as $(t_{i+j/p})_{0\le i\le n,\, 0\le j \le p-1}$, $p\in \mathbb{N}^*$, 
such that it satisfies 
$
t_i< t_{i+1/p} < \dots < t_{i+(p-1)/p} < t_{i+1}.
$
We approximate $m_1$ and $m_2$ on each interval $[t_{i+j/p},t_{i+(j+1)/p}]$ by a polynomial function
denoted respectively by $\tilde m_1$ and $\tilde m_2$.
\begin{exmp}[Triangular approximation of a lobe.]
  \label{ex:affine-approx}
  Since $\dot m_1 = K_m \dot c_N /(K_m+c_N)^2$ and $\dot m_2 = \tau_2\dot m_1/(\tau_1+ \tau_2m_1)^2$, 
  then $\dot m_1, \ \dot m_2$ are zero when 
  $c_N$ is maximal. 
  On $[t_{k+j/2},t_{k+(j+1)/2}]$, $j=0,1$, $m_i,\ i=1,2$ can be approximated by 
  \[\tilde m_i(t) = a_{ij,k}\,(t-t_{k+j/2})+b_{ij,k},\ k=0,\dots, n,\]
  where $t_{k+1/2} = \underset{t\in [t_k,t_{k+1}]}{\text{argmax }} \ c_N(t)$. 
  Computing, we have
  $t_{1/2} = \tau_c, \text{ and for } k=1,\dots,n:$
  \[
  t_{k+1/2} = \underset{t_\in [t_k,t_{k+1}]}{\text{argmax }}\, c_N(t) = \tau_c + 
    \frac{\sum_{i=1}^k R_i\eta_i\, t_i\, e^{t_{i}/\tau_c}}{\sum_{i=0}^k R_i\eta_i\, e^{t_{i}/\tau_c}}.
  \]
  Imposing $\tilde m_i(t_{k+j/2})=m_i(t_{k+j/2})$ and  $\tilde m_i(t_{k+(j+1)/2})=m_i(t_{k+(j+1)/2})$, we get:
  \[
    a_{ij,k} = \frac{m_i(t_{k+(j+1)/2}) - m_i(t_{k+j/2})}{t_{k+(j+1)/2}-t_{k+j/2}},
    \
    b_{ij,k} = m_i(t_{k+j/2}).
  \]
\end{exmp}

Take $k_s\in \{0,\dots,n\}$, $j_s\in \{0,\dots,p-1\}$ and $t\in [t_{k_s+j_s/p},t_{k_s+(j_s+1)/p}]$
and let $\Psi(u; i,j)$ 
be the primitive of $\tilde m_2$ 
on $[t_{i+j/p},t_{i+(j+1)/p}]$, zero at $t=t_{i+j/p}$. 
We have for  $t\in [t_{k_t+j_t/p},t_{k_t+(j_t+1)/p}]$:
\begin{equation}
  \begin{aligned}
    \tilde M(t) = 
    \exp\left(-
      \sum_{k=0}^{k_s-1} \sum_{j=0}^{p-1} 
      \left[ 
        \Psi(u;k,j)
      \right]_{t_{k+j/p}}^{t_{k+(j+1)/p}}
      -\sum_{j=0}^{j_s-1} 
      \left[ 
        \Psi(u;k_s,j)
      \right]_{t_{k_s+j/p}}^{t_{k_s+(j+1)/p}}
      -\left[ 
        \Psi(u;k_s,j_s)
      \right]_{t_{k_s+j_s/p}}^{t}
    \right),
  \end{aligned}
  \label{eq:Mkapp}
\end{equation}
and for $s\in [t_{k_s+j_s/p},t_{k_s+(j_s+1)/p}]$, $t\in [t_{k_t+j_t/p},t_{k_t+(j_t+1)/p}]$, we get: 
\begin{equation}
  \begin{aligned}
    \tilde M(t) \tilde M^{-1}(s)
    =\exp\Bigg( &-
      \left[
        \Psi(u;k_s,j_s)
      \right]_s^{t_{k_s+j_s/p}}
      +
      \sum_{j=0}^{j_s-1} \left[ 
        \Psi(u;k_s,j)
      \right]_{t_{k_s+j/p}}^{t_{k_s+(j+1)/p}}
      -
      \sum_{j=0}^{j_t-1}\left[ 
        \Psi(u;k_t,j)
      \right]_{t_{k_t+j/p}}^{t_{k_t+(j+1)/p}}
      \\
      &-
      \sum_{i=k_s}^{k_t-1}\sum_{j=0}^{p-1} 
      \left[
        \Psi(u;i,j)
      \right]_{t_{i+j/p}}^{t_{i+(j+1)/p}}
      - \left[ 
        \Psi(u;k_t,j_t)
      \right]_{t_{k_t+j_t/p}}^t
    \Bigg).
  \end{aligned}
  \label{eq:MtMsinv}
\end{equation}

To integrate the product $\tilde M(t) \tilde M^{-1}(s)\tilde m_1(s)$ with respect to $s$, 
we gather the terms depending on $s$ in \eqref{eq:MtMsinv} together and we get,
for $t\in [t_{k_t+j_t/p},t_{k_t+(j_t+1)/p}]$:
\begin{equation}
  \begin{aligned}
    &\int_{t_{k_s+j_s/p}}^{t_{k_s+(j_s+1)/p}}
    \tilde M(t)\tilde M^{-1}(s) \, \tilde m_1(s)\, \dd s
    =
    \int_{t_{k_s+j_s/p}}^{t_{k_s+(j_s+1)/p}}\exp\left( 
      \Psi(s;k_s,j_s)
    \right)\ \tilde m_1(s)\, \dd s
    \\
    &\exp \Bigg(- \Psi(t;k_t,j_t)
      +
      \sum_{j=0}^{j_s-1} 
        \Psi(t_{k_s+(j+1)/p};k_s,j)
      -
      \sum_{j=0}^{j_t-1}
        \Psi(t_{k_t+(j+1)/p};k_t,j)
      -
      \sum_{i=k_s}^{k_t-1}\sum_{j=0}^{p-1} 
        \Psi(t_{i+(j+1)/p};i,j)
    \Bigg).
  \end{aligned}
  \label{eq:intMMm1}
\end{equation}

Consequently, we obtain
an approximation of $F$ on $[t_{k_t+j_t/p},t_{k_t+(j_t+1)/p}]$, $k_t=0,\dots,n$, $j_t=0,\dots,p-1$
writing:
\begin{equation}
  \begin{aligned}
    \tilde F(t)/A0 &= 
    \int_{0}^{t}
    \tilde M(t)\tilde M^{-1}(s) \, \tilde m_1(s)\, \dd s
    \\
    &=
    \sum_{i=0}^{k_t-1}\sum_{j=0}^{p-1}
    \int_{t_{i+j/p}}^{t_{i+(j+1)/p}}
    \tilde M(t)\tilde M^{-1}(s) \, \tilde m_1(s)\, \dd s
    + 
    \sum_{j=0}^{j_t-1}
    \int_{t_{k_t+j/p}}^{t_{k_t+(j+1)/p}}
    \tilde M(t)\tilde M^{-1}(s) \, \tilde m_1(s)\, \dd s
    \\
    &+ 
    \int_{t_{k_t+j_t/p}}^{t}
    \tilde M(t)\tilde M^{-1}(s) \, \tilde m_1(s)\, \dd s.
  \end{aligned}
  \label{eq:Ftilde}
\end{equation}

\begin{prop}
Choosing $\tilde m_1$ as a piecewise polynomial function and $\tilde m_2$ as a piecewise 
constant function on $[0,T]$, the function:
\begin{equation*}
\tilde F(t) = 
    A\,\int_{0}^{t}
    \tilde M(t)\tilde M^{-1}(s) \, \tilde m_1(s)\, \dd s,
\end{equation*}
where
$\displaystyle \tilde M(t) = 
\exp\left(-\int_0^t \tilde m_2(s)\,\dd s
\right),$
has a closed-form expression in the polynomial-exponential category.
\end{prop}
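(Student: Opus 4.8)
The plan is to exploit the stability of the polynomial-exponential category of Definition \ref{def:category} under products and integration, and to read off the required structure directly from the already-derived decomposition \eqref{eq:Ftilde}, so that the whole argument reduces to verifying that each building block stays in the category.

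First I would examine $\tilde M$. Since $\tilde m_2$ is taken piecewise constant, say $\tilde m_2(s) = c_{i,j}$ on each $[t_{i+j/p}, t_{i+(j+1)/p}]$, the primitive is affine in its argument, $\Psi(u;i,j) = c_{i,j}(u - t_{i+j/p})$, and $\int_0^t \tilde m_2(s)\,\dd s$ is continuous and piecewise affine in $t$. Consequently $\tilde M(t) = \exp(-\int_0^t \tilde m_2)$ and its nowhere-vanishing inverse $\tilde M^{-1}(s) = \exp(+\int_0^s \tilde m_2)$ are, on each subinterval, pure exponentials $e^{\alpha t + \beta}$; in particular each lies in the category, with the product bookkeeping already carried out in \eqref{eq:MtMsinv}.

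Next I would treat the integrand on a single pair of subintervals. Gathering the $s$-dependent factors as in \eqref{eq:intMMm1}, the inner integrand reduces to $\exp(\Psi(s;k_s,j_s))\,\tilde m_1(s)$, i.e.\ a pure exponential in $s$ times the piecewise polynomial $\tilde m_1$; this is exactly a term $P(s)e^{\mu s}$ of the category. By the stability under integration asserted in Definition \ref{def:category} (concretely, integration by parts yields $\int P(s)e^{\mu s}\,\dd s = Q(s)e^{\mu s}$ with $\deg Q = \deg P$ when $\mu\neq 0$, or an ordinary polynomial primitive when $\mu = 0$), the definite integral over a fixed subinterval is a constant of the category, while the partial integral with variable upper limit $t$ is a polynomial-exponential in $t$. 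Multiplying by the prefactor $\exp(-\Psi(t;k_t,j_t)+\dots)$, which is a pure exponential in $t$, keeps the result in the category by closure under products.

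Finally I would assemble the pieces: the representation \eqref{eq:Ftilde} writes $\tilde F(t)/A$ on $[t_{k_t+j_t/p}, t_{k_t+(j_t+1)/p}]$ as a finite sum of such terms, so closure under finite sums together with the scalar factor $A$ shows that, on each piece of the partition, $\tilde F$ is a finite sum $\sum_n P_n(t)e^{\lambda_n t}$, hence a piecewise polynomial-exponential, as claimed. The only point requiring genuine care is the key closure fact that $\int P(s)e^{\mu s}\,\dd s$ remains polynomial-exponential, together with checking that the telescoping boundary contributions from the $\Psi$-terms are constants rather than true functions of $t$; both are precisely the stability properties recorded in Definition \ref{def:category}, so once they are invoked the remainder is bookkeeping.
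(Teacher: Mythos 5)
Your proposal is correct and follows essentially the same route as the paper: decompose $\tilde F$ over the partition $(t_{i+j/p})_{ij}$ as in \eqref{eq:Ftilde} and invoke closure of the polynomial-exponential category under products, integration, and finite sums. The paper's proof simply asserts that each integral term lies in the category, whereas you supply the verification the paper leaves implicit (piecewise-constant $\tilde m_2$ makes each $\Psi$ affine, hence the $\tilde M$ factors pure exponentials, and $\int P(s)e^{\mu s}\,\dd s = Q(s)e^{\mu s}$ keeps every block in the category), which is a faithful filling-in of the same argument rather than a different one.
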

\begin{pf}
Decomposing the integral \eqref{eq:Ftilde} as a sum of integrals over
the partition $(t_{i+j/p})_{ij}$ gives the 
expression:
\begin{align*}
    \tilde F(t)/A &= 
    \sum_{i=0}^{k_t-1}\sum_{j=0}^{p-1}
    \int_{t_{i+j/p}}^{t_{i+(j+1)/p}}
    \tilde M(t)\tilde M^{-1}(s) \, \tilde m_1(s)\, \dd s 
    \\
    &+ 
    \sum_{j=0}^{j_t-1}
    \int_{t_{k_t+j/p}}^{t_{k_t+(j+1)/p}}
    \tilde M(t)\tilde M^{-1}(s) \, \tilde m_1(s)\, \dd s
    \\
    &+ 
    \int_{t_{k_t+j_t/p}}^{t}
    \tilde M(t)\tilde M^{-1}(s) \, \tilde m_1(s)\, \dd s, \addtocounter{equation}{1}\tag{\theequation}
      \label{eq:Ftilde2}
\end{align*}
for $t\in [t_{k_t+j_t/p},t_{k_t+(j_t+1)/p}]$, $k_t=0,\dots,n$, $j_t=0,\dots,p-1$,
and each of this integral term belongs to the polynomial-exponential category.
\end{pf}

\begin{rem}
  To construct $\tilde F$, the functions $m_1$ and $m_2$ were considered independently in the sense that
  the approximation
  does not rely on the relation \eqref{eq:m1m2} between $m_1$ and $m_2$.
  A direct consequence is that 
  an upper approximation of the force can be obtained from an upper approximation of $m_1$.
  Outside the scope of this paper, this method may be applied for more general non-autonomous models.
\end{rem}

\begin{prop}
  Adding a real parameter $\nu$ to the functions $\tilde m_1,\tilde m_2$ as follows
  \begin{equation*}
    \overset{\asymp}{m_1}(t;\nu) = \frac{c_N(t)}{\nu\, K_m + c_N(t)} \text{ and } 	
    \overset{\asymp}{m_2}(t;\nu) = \frac{\nu}{\tau_1 + \tau_2\, m_1(t)}
  \end{equation*}
  allows to construct an upper (or lower) approximation $\tilde F$ of $F$ parameterized by $\nu$.
  \label{prop:upper}
\end{prop}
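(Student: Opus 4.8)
The plan is to exploit the fact that, through the integral representation \eqref{eq:Fvrai}, the force is a \emph{monotone} functional of the pair $(m_1,m_2)$, and then to check that the parameter $\nu$ displaces $m_1$ and $m_2$ in directions compatible with this monotonicity. First I would rewrite \eqref{eq:Fvrai} in the equivalent kernel form
\begin{equation*}
  F(t) = A\int_0^t \exp\left(-\int_s^t m_2(r)\,\dd r\right) m_1(s)\,\dd s ,
\end{equation*}
and observe that the kernel is strictly positive while $m_1\ge 0$ and $m_2>0$. From this one reads off the two monotonicity properties of the functional $F[m_1,m_2]$: it is nondecreasing under a pointwise increase of $m_1$, and nonincreasing under a pointwise increase of $m_2$, since enlarging $m_2$ only shrinks the exponential kernel. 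These are the only structural facts required.

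Second I would analyse the dependence on $\nu$. A direct differentiation gives $\partial_\nu \overset{\asymp}{m_1}(t;\nu) = -c_N(t)K_m/(\nu K_m + c_N(t))^2 \le 0$, so $\overset{\asymp}{m_1}(\cdot;\nu)$ decreases with $\nu$, while $\overset{\asymp}{m_2}(t;\nu)=\nu\,m_2(t)$ is increasing in $\nu$; moreover both reduce to the originals of \eqref{eq:m1m2} at $\nu=1$. Denoting by $\overset{\asymp}{F}(\cdot;\nu)$ the force obtained by substituting $(\overset{\asymp}{m_1},\overset{\asymp}{m_2})$ into the kernel integral, I would then differentiate under the integral sign: the kernel contributes $-(\int_s^t m_2)\,(\text{kernel})\,\overset{\asymp}{m_1}\le 0$ and the $m_1$-factor contributes $(\text{kernel})\,\partial_\nu\overset{\asymp}{m_1}\le 0$, whence $\partial_\nu\overset{\asymp}{F}(t;\nu)\le 0$. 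Since $\overset{\asymp}{F}(t;1)=F(t)$, monotonicity in $\nu$ gives $\overset{\asymp}{F}(\cdot;\nu)\ge F$ for $\nu\le 1$ (upper approximation) and $\overset{\asymp}{F}(\cdot;\nu)\le F$ for $\nu\ge 1$ (lower approximation), which is the assertion.

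It remains to connect this exact one-sided family to the computable $\tilde F$ of the previous subsection: applying the piecewise-polynomial/piecewise-constant scheme to $\overset{\asymp}{m_1},\overset{\asymp}{m_2}$ yields, by the preceding proposition, a closed-form $\tilde F$ in the polynomial-exponential category, and the remark before the statement already records that an upper (resp. lower) approximation of the force follows from an upper (resp. lower) approximation of $m_1$.

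The step I expect to be the main obstacle is precisely this last one: the interpolation that makes $\tilde F$ computable need not preserve the inequalities $\overset{\asymp}{m_1}\ge m_1$, $\overset{\asymp}{m_2}\le m_2$ on each subinterval, because secant interpolation of a concave function lies below it. I would resolve this either by selecting one-sided interpolants on each $[t_{i+j/p},t_{i+(j+1)/p}]$---a polynomial majorant for $m_1$ and a piecewise-constant minorant for $m_2$, both available from the concavity/convexity properties established earlier---or by absorbing the interpolation error into the free parameter $\nu$, enlarging $|\nu-1|$ until the sign of $\tilde F - F$ is certified.
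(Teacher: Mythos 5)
Your proof is correct and is essentially the argument the paper intends: the paper states Proposition \ref{prop:upper} without any proof, leaning only on the preceding remark that an upper approximation of the force follows from an upper approximation of $m_1$, and your kernel-monotonicity argument (the functional in \eqref{eq:Fvrai} is nondecreasing under a pointwise increase of $m_1$ and nonincreasing under a pointwise increase of $m_2$, while $\overset{\asymp}{m_1}$ decreases and $\overset{\asymp}{m_2}=\nu\, m_2$ increases in $\nu$, both coinciding with \eqref{eq:m1m2} at $\nu=1$) supplies exactly the missing details. Your closing caveat about the sign of the interpolation error is also the right thing to flag: the paper resolves it implicitly in the way of your second fix, choosing $\nu$ away from $1$ (e.g.\ $\nu=0.95$ in the OCP2 simulations) so that the one-sided margin created by $\nu$ absorbs the error committed by the piecewise interpolation of $\overset{\asymp}{m_1}$ and $\overset{\asymp}{m_2}$.
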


\begin{rem}
A naive approach is to use classic integration schemes to define an explicit expression for $\tilde F$.
Namely, using an explicit Euler scheme for the force equation \eqref{eq:dotF} 
adapted to the partition $(t_{i+j/p})_{ij}$
gives:
\[
\tilde F(t_{i+(j+1)/p}) = 
\tilde F(t_{i+j/p})\, c_{i,j} + A\,d_{i,j},  
\]
where $c_{ij}=(1 -  h_{i,j}\,m_2(t_{i+j/p}))$,
$h_{i,j}=t_{i+(j+1)/p}-t_{i+j/p}$
and $d_{ij}=m_1(t_{i+j/p})$ for $i=0,\dots,n$
and $j=0,\dots,p-1$.
We deduce the following explicit expression for $\tilde F(t_{k_t+j_t/p}),\ k_t=0,\dots,n$, $j_t=0,\dots p-1$:
\begin{align*}
   \tilde F(t_{k_t+j_t/p})/A &= \sum_{j=0}^{j_t-1}
    h_{k_t,j}\, d_{k_t,j} \prod_{j'=j+1}^{j_t-1}\! c_{k_t,j'}     \addtocounter{equation}{1}\tag{\theequation}
    \\
    &+\sum_{i=0}^{k_t-1}\sum_{j=0}^{p-1}
    h_{i,j}\, d_{i,j}\left(
    \prod_{j'=0}^{p-1}\prod_{i'=i+1}^{k_t-1}\! c_{i',j'}
    \prod_{j'=j+1}^{p-1}\! c_{i,j'}
    \prod_{j'=0}^{j_t-1}\! c_{k_t,j'}\right).
      \label{eq:Ftilde}
\end{align*}
However, such method is not adapted for the design of our electrostimualtor (see Section \ref{sec6}).
Indeed, it does not exploit the structure of the Hill functions $m_1$ and
$m_2$ and yields worse results -- in terms of time complexity and approximation error -- 
compared to the approximation \eqref{eq:Ftilde}. 
\end{rem}

{\bf Error estimate.}

We give a bound on the error between the approximation
$\tilde F$ and $F$ in the case where $\tilde {m_2}_{\mid [t_k,t_{k+1}]}$ is the average of $m_2$ on 
$[t_k,t_{k+1}]$.
\begin{prop}
  Consider the case where $0\le \tilde m_1(t)\le 1$ and $\tilde m_2$ is equal to the average 
  of $m_2$ on $[t_j,t_{j+1}]$, $j=0,\dots,n$.
  Assume moreover 
  that each restriction on $[t_j,t_{j+1}]$, $j=0,\dots,n$ of $m_1$ (resp. $m_2$) is concave (resp. convex).
  Then, the error between the force $F$ and its 
   approximation $\tilde F$ defined by \eqref{eq:Ftilde} satisfies for 
  $k=0,\dots,n$:\\
$\displaystyle 
    |F(t_k) - \tilde F(t_k)|/A \le 
    \int_0^{t_k} |m_1(s) - \tilde m_1(s)|\, \dd s 
+ 
    t_k \int_0^{t_k} |m_2(s)-\tilde m_2(s)|\, \dd s.
 $ \label{prop:error}
\end{prop}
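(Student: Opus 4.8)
The plan is to compare the two linear Cauchy problems satisfied by $F$ and $\tilde F$ and to estimate their difference by a variation-of-parameters (Duhamel) argument, in the same spirit as the proof of Proposition \ref{prop:Fs}. First I would record that both functions solve a scalar linear ODE: $F$ satisfies \eqref{eq:dotF}, namely $\dot F = -m_2 F + A m_1$ with $F(0)=0$, while by construction $\tilde F$ is the variation-of-parameters solution of $\dot{\tilde F} = -\tilde m_2\tilde F + A\tilde m_1$ with $\tilde F(0)=0$. Setting $\delta = F-\tilde F$ and adding and subtracting $\tilde m_2 F$ in the difference of the two right-hand sides to isolate the term proportional to $\delta$, I obtain the linear ODE
\[
  \dot\delta = -\tilde m_2\,\delta + g, \qquad g = (\tilde m_2-m_2)\,F + A\,(m_1-\tilde m_1),
\]
with $\delta(0)=0$.

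Next I would integrate this ODE with the integrating factor $\tilde M^{-1}(s)=\exp\!\big(\int_0^s\tilde m_2\big)$, which gives the Duhamel formula $\delta(t_k)=\int_0^{t_k}\tilde M(t_k)\tilde M^{-1}(s)\,g(s)\,\dd s$, structurally identical to \eqref{eq:Fvrai} but with the approximate coefficients. The transition kernel equals $\tilde M(t_k)\tilde M^{-1}(s)=\exp\!\big(-\int_s^{t_k}\tilde m_2\big)$ and lies in $(0,1]$ for $s\le t_k$, since $\tilde m_2$, being the average on each subinterval of $m_2=1/(\tau_1+\tau_2 m_1)>0$, is itself nonnegative; this is the only place where the averaging hypothesis is used. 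Taking absolute values and applying the triangle inequality to $g$ then yields
\[
  |\delta(t_k)|/A \le \int_0^{t_k} |m_1-\tilde m_1|\,\dd s + \int_0^{t_k} |m_2-\tilde m_2|\,\frac{F(s)}{A}\,\dd s.
\]

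The only remaining point, and the one that produces the prefactor $t_k$ in the claimed bound, is an a priori uniform estimate of $F$. Writing $F$ through its own Duhamel representation \eqref{eq:Fvrai}, $F(s)/A=\int_0^s M(s)M^{-1}(u)\,m_1(u)\,\dd u$, I would bound the kernel $M(s)M^{-1}(u)=\exp\!\big(-\int_u^s m_2\big)\le 1$ (again because $m_2>0$) and use $0\le m_1\le 1$, which is intrinsic to the Michaelis--Menten--Hill form \eqref{eq:m1m2}, to get $0\le F(s)/A\le s\le t_k$ for every $s\in[0,t_k]$. Substituting this bound into the second integral above replaces $F(s)/A$ by $t_k$ and gives exactly the asserted inequality.

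I do not expect a genuine obstacle here: the estimate is a short Grönwall/Duhamel computation. The only places requiring care are the bookkeeping of signs when forming the ODE for $\delta$, so that the cross term $-\tilde m_2\delta$ is isolated cleanly and the contractive kernel $\exp(-\int_s^{t_k}\tilde m_2)\le 1$ appears, and the recognition that the naturally arising $F$-weighted error term is tamed by the crude but sufficient bound $F/A\le t_k$ rather than by any finer property of $\tilde F$. Note that the concavity of $m_1$ and convexity of $m_2$ assumed in the statement are not needed for this inequality itself; they serve to keep the two integrated error terms $\int_0^{t_k}|m_i-\tilde m_i|\,\dd s$ small for the piecewise-affine and piecewise-constant choices of $\tilde m_1,\tilde m_2$, which is the purpose of the companion construction.
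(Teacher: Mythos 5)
Your proof is correct, and it reaches the stated bound by a genuinely different route than the paper. The paper never forms a differential equation for the error: it compares the two Duhamel representations directly, splitting $|F(t_k)-\tilde F(t_k)|/A$ into the $m_1$-discrepancy weighted by the exact kernel $M(t_k)M^{-1}(s)\le 1$, plus the kernel discrepancy $M(t_k)M^{-1}(s)-\tilde M(t_k)\tilde M^{-1}(s)$ weighted by $\tilde m_1\le 1$, and then estimates that kernel discrepancy interval by interval: the averaging hypothesis gives the cancellation $\int_{t_i}^{t_{i+1}}(\tilde m_2-m_2)\,\dd u=0$, the convexity of $m_2$ drives a three-case sign analysis of $s\mapsto\int_{t_i}^{s}(\tilde m_2-m_2)\,\dd u$, and the elementary inequality $1-e^{-x}\le x$ yields the per-interval bound $(t_{i+1}-t_i)\int_{t_i}^{t_{i+1}}|\tilde m_2-m_2|\,\dd u$. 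Your argument --- the ODE $\dot\delta=-\tilde m_2\,\delta+g$ for $\delta=F-\tilde F$, the kernel bound $\exp(-\int_s^{t_k}\tilde m_2\,\dd u)\le 1$ from $\tilde m_2\ge 0$, and the a priori estimate $F(s)/A\le s$ from $0\le m_1\le 1$ and $m_2>0$ --- is shorter and uses strictly weaker hypotheses: neither the mean-value property of $\tilde m_2$ (beyond its positivity), nor the bound on $\tilde m_1$, nor any concavity/convexity is needed, exactly as you note at the end. What the paper's heavier bookkeeping buys is a sharper constant: summing its per-interval bounds gives $\max_i(t_{i+1}-t_i)\,\int_0^{t_k}|\tilde m_2-m_2|\,\dd s$ in place of $t_k\int_0^{t_k}|\tilde m_2-m_2|\,\dd s$, i.e.\ an error governed by the mesh of the pulse partition rather than by the elapsed time $t_k$; the proposition as stated discards this refinement, but it is the real payoff of exploiting the zero-mean cancellation, and it matters when the train contains many pulses (your intermediate weight $\int_0^{t_k}s\,|\tilde m_2(s)-m_2(s)|\,\dd s$ is also finer than the stated bound, but it does not shrink with the mesh). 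One minor point of rigor on your side: $\tilde m_1,\tilde m_2$ are only piecewise smooth, so the ODEs for $\tilde F$ and $\delta$ hold only almost everywhere; since $\delta$ is absolutely continuous the Duhamel formula remains valid, but this deserves a word.
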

\begin{proof}
  For $t_k>s$, we have:
  \begin{equation*}
    \begin{aligned}
      |F(t_k)-\tilde F(t_k)|/A_0 &=
      \left|\int_0^{t_k} M(t_k)M^{-1}(s) m_1(s) - \tilde M(t_k)\tilde M^{-1}(s) \tilde m_1(s)\, \dd s\right|
      \\
      &\le
      \int_0^{t_k} M(t_k)M^{-1}(s) |m_1(s) - \tilde m_1(s)|\, \dd s
      + \left|\int_0^{t_k} \tilde m_1(s) M(t_k)M^{-1}(s) -  \tilde M(t_k) \tilde M^{-1}(s)|\, \dd s\right|
      \\
      &\le
      \int_0^{t_k} |m_1(s) - \tilde m_1(s)|\, \dd s + 
       \left|\int_0^{t_k} M(t_k)M^{-1}(s)- \tilde M(t_k) \tilde M^{-1}(s)  \, \dd s\right|.
       \\
      &= \int_0^{t_k} |m_1(s) - \tilde m_1(s)|\, \dd s + 
      \left|\int_0^{t_k}  \exp\left( -\int_{s}^{t_k} m_2(u)\, \dd u \right) 
      - \exp\left( -\int_{s}^{t_k} \tilde m_2(u)\, \dd u \right)\,\dd s\right|
      \\
      &= \int_{0}^{t_{k}} |m_1(s) - \tilde m_1(s)|\, \dd s + 
      \left|\sum_{i=0}^{k-1} \int_{t_i}^{t_{i+1}}  \exp\left( -\int_{s}^{t_k} m_2(u)\, \dd u \right) 
      - \exp\left( -\int_{s}^{t_k} \tilde m_2(u)\, \dd u \right)\,\dd s\right|.
    \end{aligned}
    \label{eq:proof-error}
  \end{equation*}

  Recall the function $m_2$ is decreasing on $[t_i,s_i]$ and increasing on $[s_i,t_{i+1}]$
  where $s_i$ is the unique maximum of $c_N$ on $[t_i,t_{i+1}]$.
  Define $\displaystyle \xi(s)\coloneqq \tilde m_2(s)- m_2(s)$.
  Since $m_2$ is convex on $[t_i,t_{i+1}]$, we have three cases:
  
\begin{itemize}
\item[(i)] $\displaystyle \int_{t_i}^{s}\xi(u)\, \dd u\le 0$ for $s\in [t_i,t_{i+1}]$. 
We have, for $i=0,\dots,k-1$,
  \begin{equation}
    \begin{aligned} 
     \Bigg|\int_{t_i}^{t_{i+1}}  &\exp\left( -\int_{s}^{t_k} m_2(u)\, \dd u \right) 
      - \exp\left( -\int_{s}^{t_k} \tilde m_2(u)\, \dd u \right)\,\dd s\Bigg|
      \\
      &=     
      \left| \int_{t_i}^{t_{i+1}}
      \exp\left( \int_{t_{k}}^s m_2(u)\, \dd u \right) \left(1- 
        \exp\left( \int_{t_{k}}^s \xi(u)\, \dd u \right)\right)\, 
      \dd s  \right|
      \\
            &\le     
       \int_{t_i}^{t_{i+1}}
      \left|1- 
        \exp\left( \int_{t_{i}}^s \xi(u)\, \dd u \right)\, 
      \right|\dd s  , \text{ (since }  \int_{t_j}^{t_{j+1}} \xi(u)\, \dd u =0)
      \\
      &\le
      \int_{t_i}^{t_{i+1}}
       \int_{t_i}^s -\xi(u)\, \dd u\, 
     \dd s
     \\
      &\le
      (t_{i+1}-t_i)\ 
       \int_{t_i}^{t_{i+1}} |\xi(u)|\, \dd u.
    \end{aligned}
  \end{equation}
\item[(ii)] $\displaystyle \int_{t_i}^{s}\xi(u)\, \dd u\ge 0$ for $s\in [t_i,t_{i+1}]$. 
We obtain the same inequality as in the case (i) by replacing $\xi$ by $-\xi$.
\item[(iii)] There exists an unique $\theta_i \in [t_i,t_{i+1}]$ such that 
$\displaystyle \int_{t_i}^{s}\xi(u)\, \dd u\le 0$ for $s\in [t_i,\theta_{i}]$ and 
$\ge 0$ for $s\in [\theta_{i},t_{i+1}]$.
Write
  \begin{equation}
    \begin{aligned} 
     \Bigg|\int_{t_i}^{t_{i+1}}  &\exp\left( -\int_{s}^{t_k} m_2(u)\, \dd u \right) 
      - \exp\left( -\int_{s}^{t_k} \tilde m_2(u)\, \dd u \right)\,\dd s\Bigg|
      \\
      &\le   
           \Bigg|\int_{t_i}^{\theta_i}  \exp\left( -\int_{s}^{t_k} m_2(u)\, \dd u \right) 
      - \exp\left( -\int_{s}^{t_k} \tilde m_2(u)\, \dd u \right)\,\dd s\Bigg|  
      \\
      &\qquad +
                 \Bigg|\int_{\theta_i}^{t_{i+1}} \exp\left( -\int_{s}^{t_k} m_2(u)\, \dd u \right) 
      - \exp\left( -\int_{s}^{t_k} \tilde m_2(u)\, \dd u \right)\,\dd s\Bigg|  
    \end{aligned}\label{eq:case3-proof}
  \end{equation}
 We have: $\int_{\theta_i}^{s}\xi(u)\, \dd u\le 0$  for $s\in [t_i,\theta_i]$
 and  $\ge 0$ for $s\in [\theta_i,t_{i+1}]$.
 As in the case (i), the first integral in the right hand side of \eqref{eq:case3-proof} 
 is bounded by 
 $(\theta_i-t_i)\   \int_{t_i}^{\theta_i} |\xi(u)|\, \dd u$.
 Likewise, the second integral in the right hand side of \eqref{eq:case3-proof} is bounded
 by  $(t_{i+1}-\theta_i)\   \int_{\theta_i}^{t_{i+1}} |\xi(u)|\, \dd u$.
\end{itemize}  
This concludes the proof.
 \end{proof}

\section{Numerical solution to optimization problems} \label{sec5}

\subsection{Functional specification for the computation of a pulses train}

The aim is to compute a local minimum $\sigma^*=(\eta_0^*,\dots,\eta_n^*,t_1^*,\dots,t_n^*,T)\in \mathbb{R}_+^{2n+2}$ 
of a cost function denoted as $\Theta$. The free final time $T$ adjusts automatically the optimal
frequency of the pulses train. 
The functional specification of the electrostimulator imposes efficient computation of this minimum 
(real time computation) and
this prevents us (at least when $\Theta$ involves the force) from 
using direct or indirect methods such as those presented in \cite{Bakir2020},
mainly because these methods are based on 
a numerical scheme to approximate the variable $F$.

\subsection{Finite dimensional optimization methods}
\label{sec5p2}

We recall basic facts about finite dimensional optimization, see \cite{boyd2004} for details, 
to emphasize that an optimal sampled-data control
problem can be viewed as an instance of such optimization problem.

The optimization problems, associated to the optimal sampled-data control problems 
{\bf OCP1} and {\bf OCP2} presented in 
Section \ref{secOCP}, can be written in the form:
\begin{equation} \label{pb:FDP}
  \begin{aligned}
    \begin{array}{ll}
      \underset{\sigma}{\min} \quad& \Theta(\sigma)
      \\
      & \Im(\sigma) \le 0,
    \end{array}
  \end{aligned}
\end{equation}
where $\Im(\sigma) = (\Xi_1(\sigma),\dots,\Xi_{3n+5}(\sigma))$ is the vector of constraints defined by: 
\begin{equation*}
\begin{array}{ll}
  \Xi_i(\sigma^*)=t_{i-1}^*-t_{i}^*+I_{\min}, \ i=1,\dots n,
  \\
  \Xi_{n+1}(\sigma^*)=t_n^*-T, \\
  \Xi_{n+2+i}(\sigma^*)=-\eta_{i}^*, \ i=0,\dots n+1,
  \\
  \Xi_{2n+4+i}(\sigma^*)=\eta_{i}^*-1, \ i=0,\dots n+1.
\end{array}
\end{equation*}

The cost  
$
\Theta: \sigma \mapsto \Theta(\sigma)
$ related to the endurance or the force strengthening program 
is smooth with respect to $\sigma$.\\
Consider the Lagrangian defined for all $(\sigma,\mu)\in \mathbb{R}^{2n+1}\times \mathbb{R}^{3n+5}_+$ by:
\[
  \mathcal{L}(\sigma,\mu) \coloneqq \Theta(\sigma) +  \mu \cdot \Im(\sigma).
\]
The problem \eqref{pb:FDP} is equivalent to the primal problem 
\[\inf_{\sigma \in \mathbb{R}^{2n+1}} \sup_{\mu\in  \mathbb{R_+}^{3n+5}}
\mathcal{L}(\sigma,\mu)\] and the first order necessary optimality conditions 
for $\sigma^*$ to be a local minimizer,
assuming the vectors $\Xi_i'(\sigma^*),\  i\in \{i,\, \Xi_i(\sigma^*)=0\}$ to be linearly independent,
state that there exists a Lagrange multiplier $\lambda\in \mathbb{R}^{3n+5}$ such that  
\begin{align*}
&\nabla_\sigma\Theta(\sigma^*) + \lambda \cdot \Im (\sigma^*) = 0,\, \lambda \cdot \Im(\sigma^*) = 0 \\
&\lambda_i\ge 0,\ \Xi_i(\sigma^*)\le 0, \ i=1,\dots,3n+5.
\end{align*}
We usually do not solve directly these optimality conditions to compute
an optimal pair  $(\sigma^*,\lambda^*)$, but a relaxation of these conditions can lead to
efficient algorithm, namely the primal-dual interior point method \cite{boyd2004}.

\subsection{Force optimization}

We consider the problems the endurance and force strengthening optimization problems {\bf OCP1} and 
{\bf OCP2}.
For each problem, we give the approximation 
$\tilde \Theta$ of the cost functions $\Theta$ 
based on the approximation $\tilde F$ of the variable $F$ described in section \ref{sec4}.
We solve the associated problem \eqref{pb:FDP} -- where $\Theta$ is replaced by $\tilde \Theta$ --
using an interior point method 
on a standard computer\footnote{4 Intel@Core$^{TM}$ i5 CPU @ 2.4Ghz}.
Note that $\tilde \Theta$ may consist of million of bytes, for that reason it is crucial to use
an approximation of the gradient of $\tilde \Theta$ with respect to $t_{i},\ i=1,\dots,n$, computed
via finite differences (vs formal computation).
We initialize the pulses train to a regular partition of $[0,1]$ and the initial amplitudes being equal to $1$.

We consider the force approximation $\tilde F$ defined by \eqref{eq:Ftilde} taking
the piecewise affine functions $\tilde m_1, \tilde m_2$ to be equal on
$[t_{k},t_{k+1}]$, $k=0,\dots,n$ to:
\begin{equation*}
  \begin{aligned}
    &\tilde m_1(t) =
    \left\{
      \begin{array}{ll}
        m_1(t_{k+1/2}) &\text{ if } t\in [t_k,t_{k+1/2}] \\
        a_{1j,k}\, (t-t_{k+1}) +  b_{1j,k},  &\text{ if } t\in [t_{k+1/2},t_{k+1}]
      \end{array}
    \right., 
    \\
    &\tilde m_2(t) =
    \left\{
      \begin{array}{ll}
        \frac{m_2(t_{k})+m_2(t_{k+1/2})}{2}  &\text{ if } t\in [t_k,t_{k+1/2}] \\
        \frac{m_2(t_{k+1/2})+m_2(t_{k+1})}{2} ,  &\text{ if } t\in [t_{k+1/2},t_{k+1}]
      \end{array}
    \right.,
  \end{aligned}
\end{equation*}
where $t_{k+1/2} =\underset{u\in [t_k,t_{k+1}]}{\text{argmax}}  c_N(u)$,
$a_{1j,k} =(m_1(t_{k+1}) - m_1(t_{k+1/2}))/(t_{k+1}-t_{k+1/2})$ and
$b_{1j,k} = m_1(t_{k+1})$.

\subsubsection{Problem OCP1:  $\displaystyle {\Theta(\sigma) \coloneqq -F(T)}$.}

\paragraph*{Approximated cost.}

The objective function $\Theta(\sigma)=-F(T)$ is approximated by the function 
$\tilde \Theta(\sigma)=-\tilde F(T)$.
The optimization variables consist in the impulse times while the amplitudes are fixed to $1$.

{\it Numerical result: }
The optimal solution $\sigma^*$, the force response $F$ and its approximation $\tilde F$ are depicted in 
Fig.\ref{fig:maxFT20}.

\begin{figure}[htpb]
  \centering
  \includegraphics[width=0.75\columnwidth]{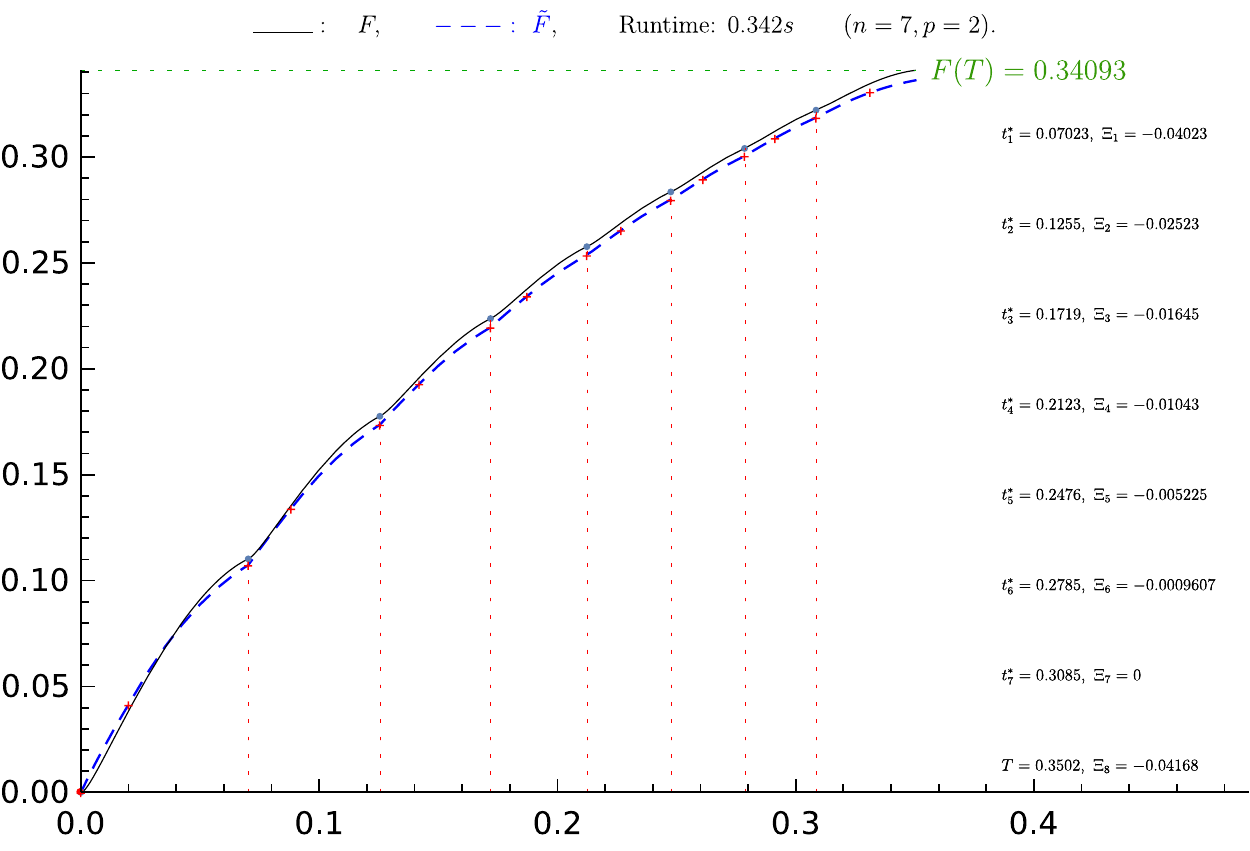}
  \caption{The dashed curve is the time evolution of $F$ 
    associated to the optimal solution $\sigma^*=(t_1^*,\dots,t_n^*,T)$ of 
    $\max_{\sigma}\, \tilde F(T)$ ($T$ free) 
    (see \eqref{eq:Ftilde} for the definition of $\tilde F$) 
    under the constraints $\Xi_i\le 0,\ i=1,\dots, n+1$ (see \eqref{pb:FDP}).   
    The continuous curve is the response $t\mapsto F(t)$ to $\sigma^*$. 
  Values of the constants are $\tau_c=20$ms, $n=7$, $I_{\min}=20$ms.  \label{fig:maxFT20}} 
\end{figure}

\subsubsection{Problem OCP2:  $\displaystyle {\Theta(\sigma) \coloneqq \int_0^T |F(t)-F_{ref}|^2\, \dd t}$.}

The cost $\displaystyle \Theta(\sigma) = \int_0^T |F(s)-F_{ref}|^2\, \dd s$ is approximated by:
\[\displaystyle \tilde \Theta(\sigma) = \sum_{k=0}^n \left( \tilde F(t_{k+1}) -F_{ref}\right)^2 (t_{k+1}-t_k),\]
where the functions $\tilde m_1$ and 
$\tilde m_2$ are replaced by $\overset{\asymp}{m_1}(t;0.95)$ and $\overset{\asymp}{m_2}(t;0.95)$ 
respectively (see Proposition \ref{prop:upper} for their definition).

{\it Numerical result: }
The optimal solution $\sigma^*$, the force response $F$ and its approximation $\tilde F$ are depicted in 
Fig.\ref{fig:minFref5_0p1}. 

\begin{figure}[htpb]
  \centering
  \includegraphics[width=0.75\columnwidth]{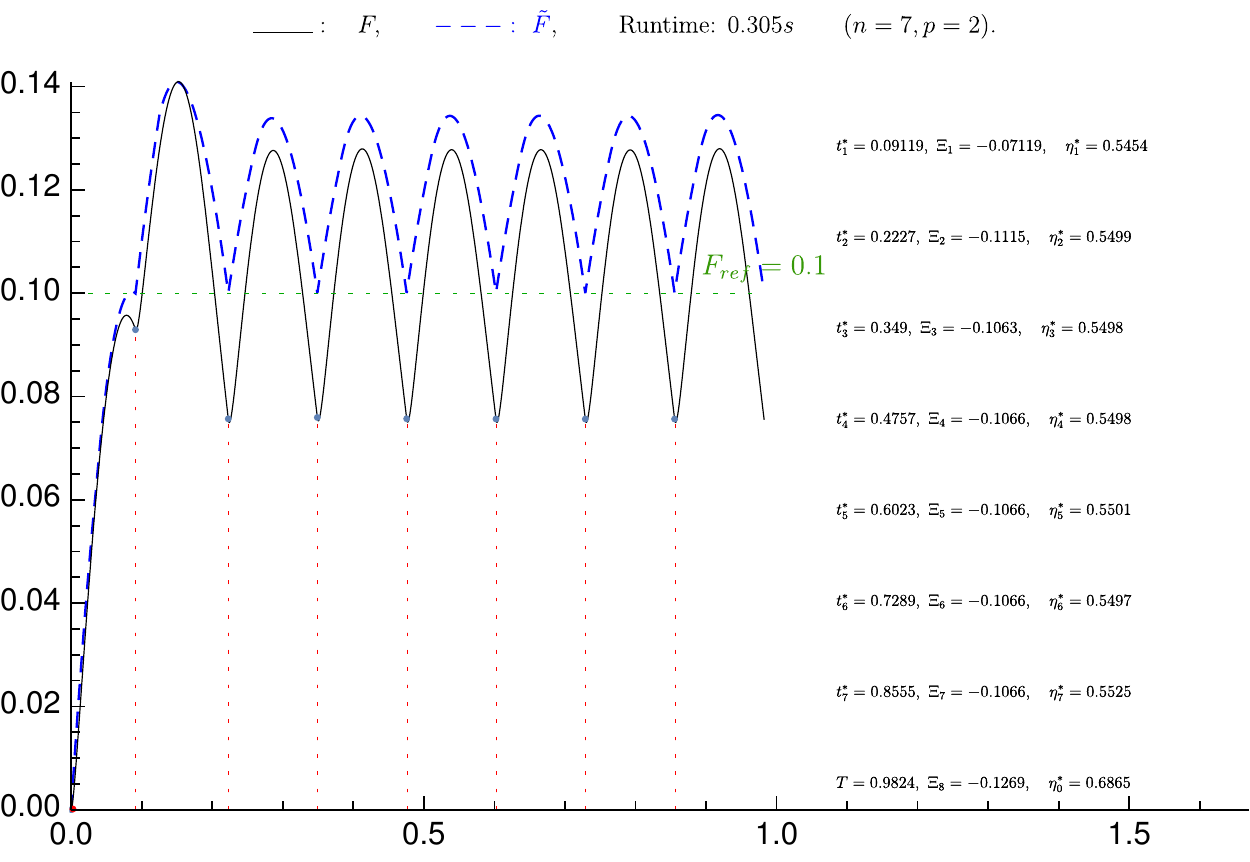}
  \caption{The dashed curve is associated to the optimal solution 
    $\sigma^*=(\eta_0^*,\dots,\eta_n^*,t_1^*,\dots,t_n^*,T)$ of 
    $\min_{\sigma}\, \sum_{k=0}^n \left( \tilde F(t_{k+1}) - F_{ref} \right)^2 (t_{k+1}-t_k)$ ($T=t_{n+1}$ is free), 
    where $\tilde F$ is the upper approximation of $F$ as described from Proposition \ref{prop:upper}
    under the constraints $\Xi_i\le 0,\ i=1,\dots, 3n+5$ (see \eqref{pb:FDP}).   
    The continuous curve is the response $t\mapsto F(t)$ to $\sigma^*$.
    Values of the constants are $\tau_c=20$ms, $n=5$, $I_{\min}=20$ms, $\nu=0.95$ and $F_{ref}=0.1$kN. 
  \label{fig:minFref5_0p1}} 
\end{figure}

\begin{figure}
  \centering
  \includegraphics[width=0.75\columnwidth]{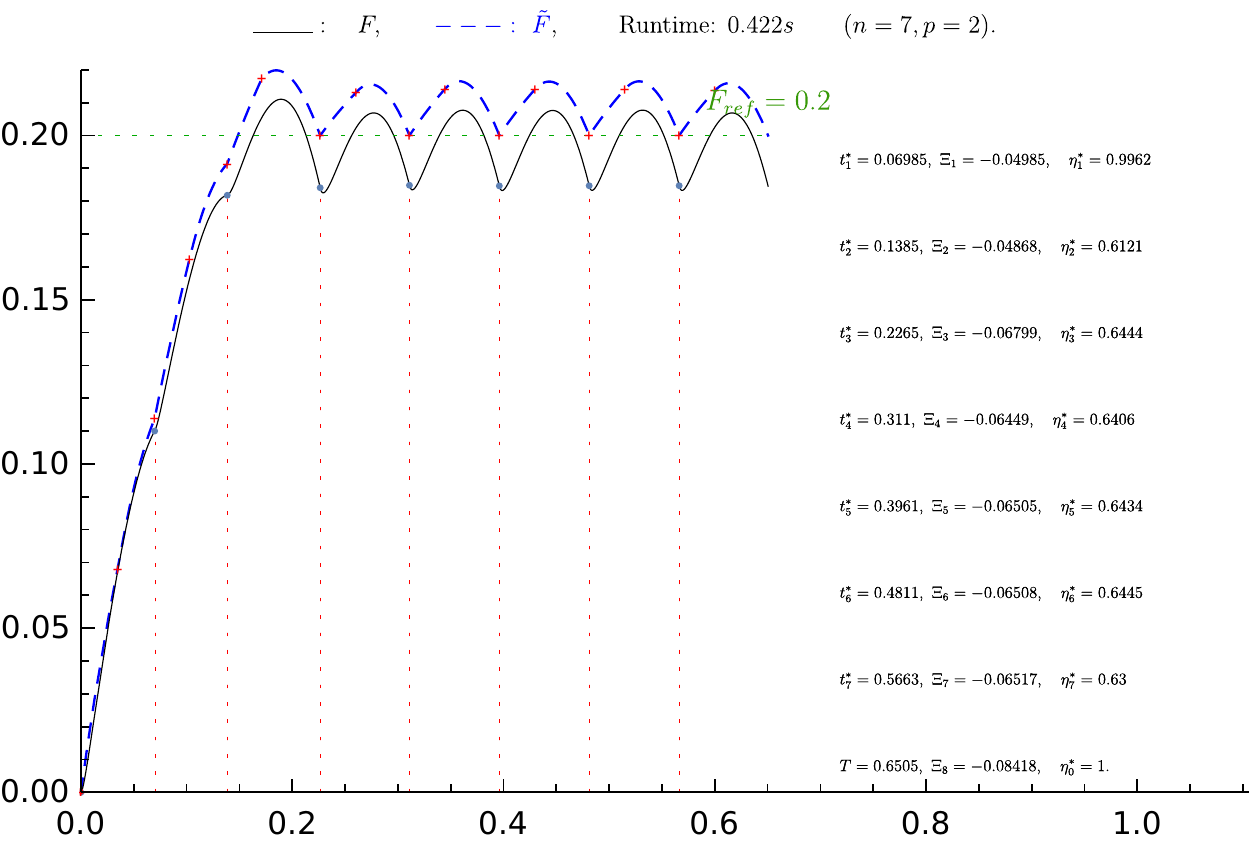}
  \caption{The dashed curve is associated to the optimal solution 
    $\sigma^*=(\eta_0^*,\dots,\eta_n^*,t_1^*,\dots,t_n^*)$ of 
    $\min_{\sigma}\, \sum_{k=0}^n \left( \tilde F(t_{k+1}) - F_{ref} \right)^2 (t_{k+1}-t_k)$ ($T=t_{n+1}$ is free), 
    where $\tilde F$ is the approximated force given by \eqref{eq:Ftilde},
    under the constraints $\Xi_i\le 0,\ i=1,\dots, 3n+5$ (see \eqref{pb:FDP}).   
    The continuous curve is the response $t\mapsto F(t)$ to $\sigma^*$.
    Values of the constants are $\tau_c=20$ms, $n=7$, $I_m=20$ms and $F_{ref}=0.2$kN. 
  \label{fig:minFref7_0p2}} 
\end{figure}

\subsection{$Ca^{2+}$ concentration optimization}

{\it Uniqueness of the optimal solution:}

Fix $\eta_i=1, i=0,\dots,n$. 
The cost function $\displaystyle \Theta(\sigma) = \int_0^T (c_N(s)-c_{ref})^2\, \dd s$ is smooth
with respect to $t_1$ and not convex on $[0,T]$. 

In Fig.\ref{fig:cout}, we plot for $n=1$ the objective function $\Theta(t_1)$ and for $T$ fixed at some 
specific values.

\begin{figure}
  \centering
  \includegraphics[width=0.75\columnwidth]{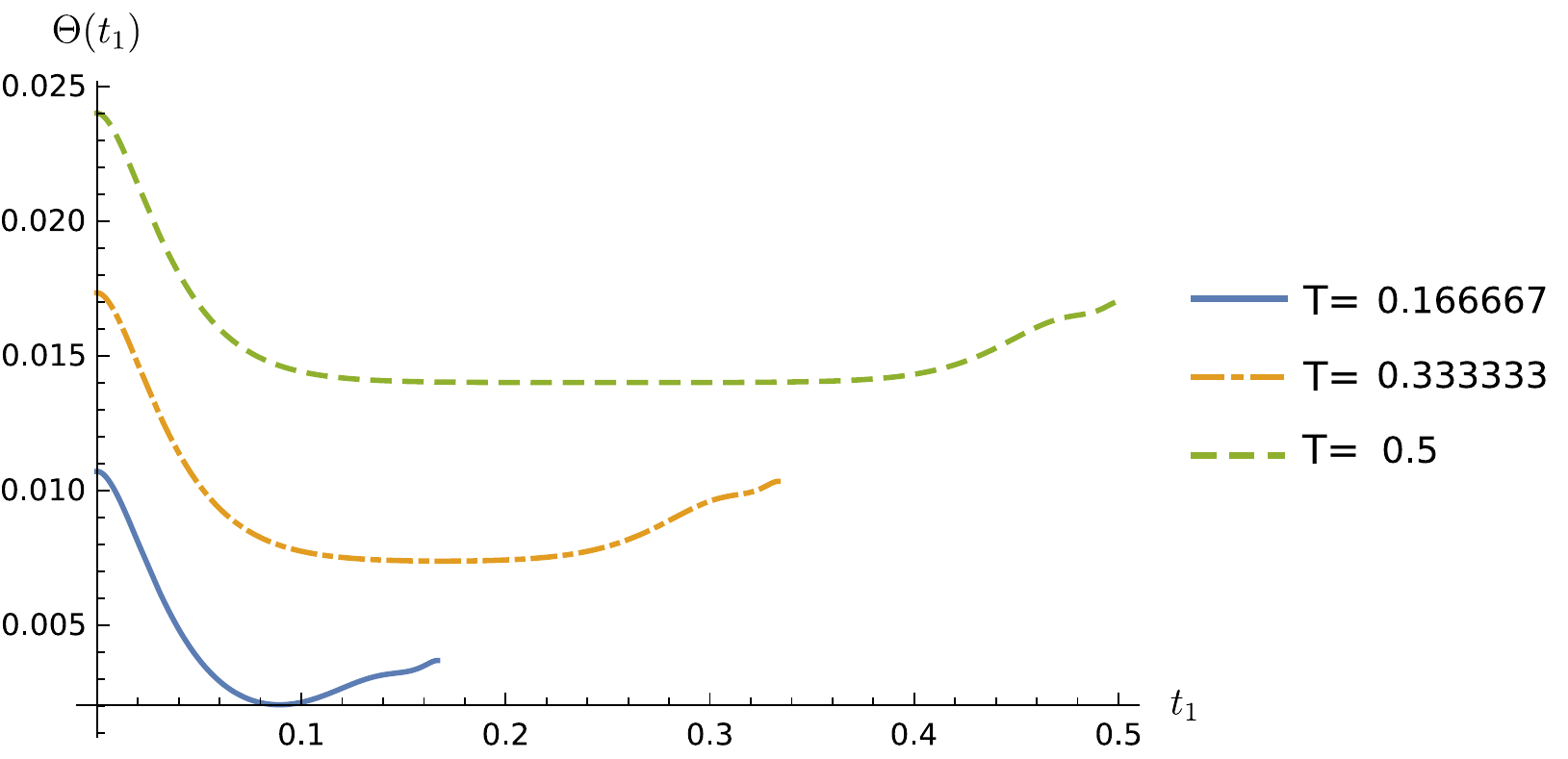}
  \caption{Objective function $\displaystyle \Theta(\sigma) = \int_0^T (c_N(s)-c_{ref})^2\, \dd s$,
  where $T$ is assigned to specific values. 
    Constants for these simulations are 
  $\tau_c=20$ms, $n=1$, $I_m=20$ms.\label{fig:cout} }
\end{figure}

The well-posedness of this optimization problem for any number $n$ of impulsions times can be shown by
inductive reasoning.

\subsubsection{Cost: ${\Theta(\sigma) = -c_N(T)}$.}

\paragraph*{True cost.}

We have an explicit expression for $c_N$, this problem can be easily solved numerically.
We consider the finite dimensional optimization problem \ref{pb:FDP} where the cost is 
\[
  \Theta(\sigma) =
  \sum_{i=0}^n R_i  (T-t_i)\,e^{-\frac{T-t_i}{\tau_c}}.
\]
(Note that the amplitudes are fixed to $1$).

{\it Numerical result:}
Fig. \ref{fig:cnT20} represents the time evolution of $c_N$ associated to (locally) optimal 
impulse times.

\begin{figure}
  \centering
  \includegraphics[width=0.75\columnwidth]{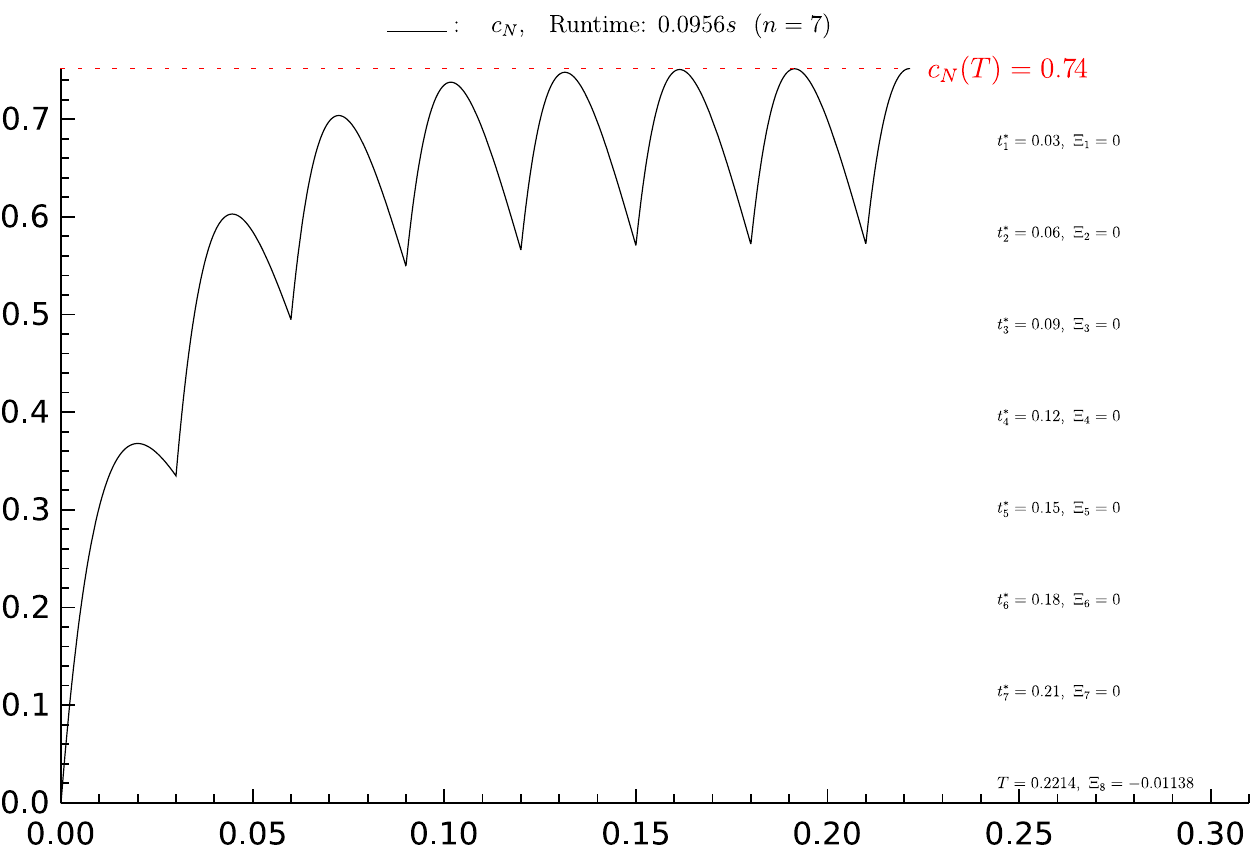}
  \caption{Time evolution of $c_N$ associated to the optimal sampling times $\sigma^*$ for 
    $\max_{\sigma}\, c_N(T)$ ($T$ free) under the constraints $\Xi_i\le 0,\ i=1,\dots, n+1$ 
    (see \eqref{pb:FDP}).
  Values of the constants are $\tau_c=20$ms, $n=7$, $I_m=20$ms.\label{fig:cnT20} }
\end{figure}

\subsubsection{Cost:  $\displaystyle {\Theta(\sigma) = \int_{0}^T |c_N(t)-c_{ref}|^2\, \dd t}$, $T$ free.}

In this case, we approximate $\Theta$ by 
\begin{equation}
  \begin{aligned}
    \tilde \Theta(\sigma) = \sum_{i=0}^n (\bar c_N^i - c_{N,ref})^2 (t_{i+1}-t_i),
  \end{aligned}
  \label{eq:approxThetaCnref}
\end{equation}
where the amplitudes and $T$ are free and $\bar c_N$ is given by Proposition \ref{prop:cnk}.

{\it Numerical result:}
The optimal solution $\sigma^*$ and its response $c_N$ are depicted in 
Fig. \ref{fig:cnref1a}.

\begin{figure}
  \centering
  \includegraphics[width=0.75\columnwidth]{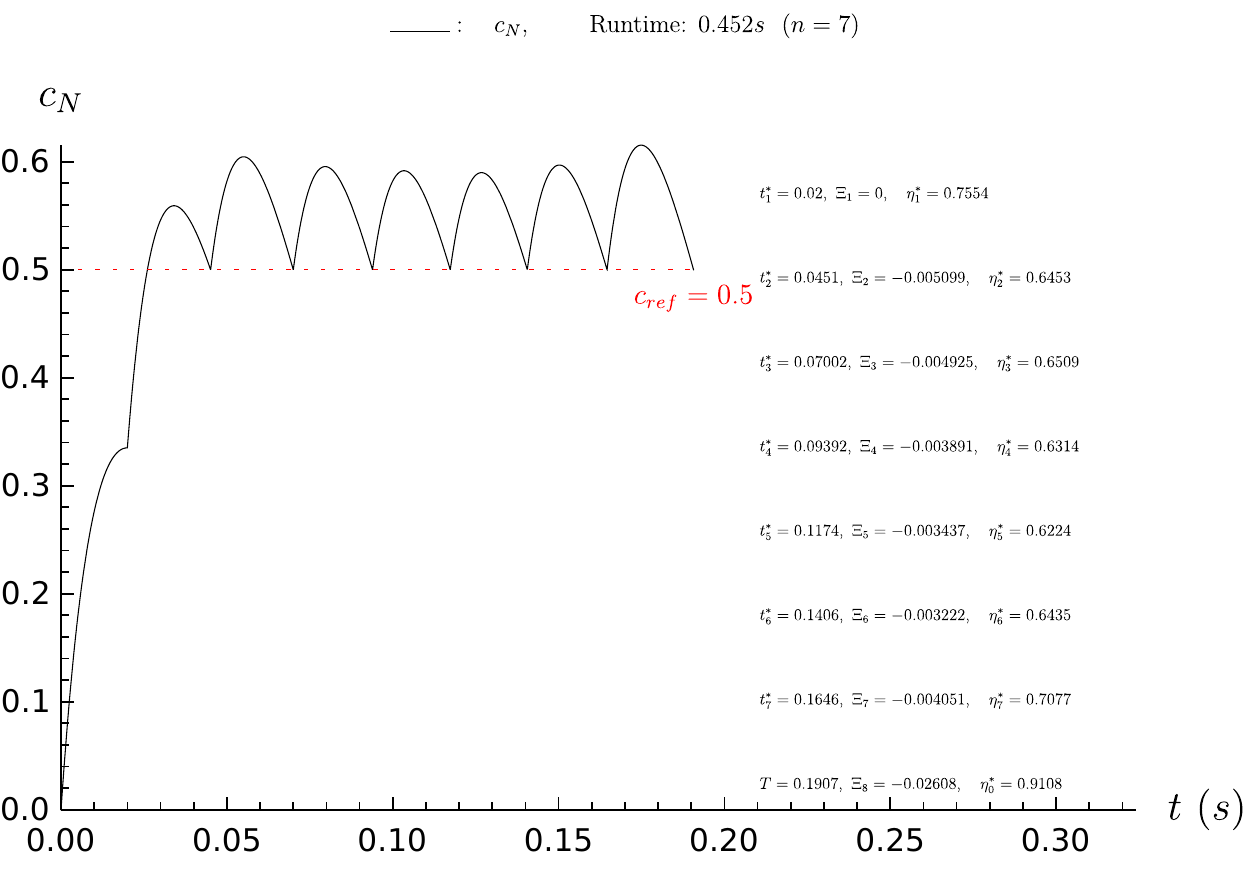}
  \caption{The dashed curve is the time evolution of $c_N$ 
    associated to the optimal solution 
    $\sigma^*=(\eta_0^*,\dots,\eta_n^*,t_1^*,\dots,t_n^*)$ of 
    $\min_{\sigma}\, \sum_{i=0}^n (\bar c_N^i - c_{N,ref})^2 (t_{i+1}-t_i)$ 
    ($T$ free), where $\bar c_N$ is defined in Proposition \ref{prop:cnk},
    under the constraints $\Xi_i\le 0,\ i=1,\dots, 3n+5$ (see \eqref{pb:FDP}).   
    The continuous curve is the response $t\mapsto c_N(t)$ to $\sigma^*$. 
    Constants for these simulations are 
  $\tau_c=20$ms, $n=7$, $I_m=20$ms and $c_{ref}=0.5$.\label{fig:cnref1a} }
\end{figure}

\section{Isometric case: design of a smart muscular electrostimulator} \label{sec6}
\label{sec6p1}

In this section we apply our study in the isometric case associated to the conception of a smart
electrostimulator. In this case, the task is to assign a constant reference force, but the general frame
 is suboptimal motion planning, see \cite{hirschorn1987,hirschorn1988} for the theoretical foundations.

Advanced commercial muscular electrostimulator for training or reeducation purposes are based on the following. 
First of all, the user defines a program training. 
Basically endurance program with low frequency sequences of trains (with constant interpulse) or force 
strengthening with high frequency trains. The program is a sequence of trains or rest periods. 
Before starting, the muscle is scanned to determine the parameters. 
A smart electrostimulator based on our study aims to design automatically such sequence, 
each program being translated into an optimization problem. 
Besides, in our framework, one can used VFT (Variable Frequency Trains) vs CFT (Constant Frequency Trains) 
in the standard case to complete the tuning of the amplitude. 

One needs the following proposition associated to the endurance program presented in Fig.\ref{fig:schema} 
to illustrate the smart electrostimulator conception.

\begin{prop} Consider the endurance program, where a reference force $F_{ref}$ is given. 
Plugging such $F_{ref}$ in $\dot{F}=0$ leads to solve the equation: 
$A\tau_2m_1^2+A\tau_1m_1-F_{ref}=0$, which has an unique positive root $m_1^+$ giving
the reference concentration $c_{N,ref}$. 
This root is stable and leads to design an optimized pulses train solving the $L^2$-optimization problem with cost: 
$\displaystyle \int_0^T |c_N(t)-c_{N,ref}|^2\, \dd t$.
\end{prop}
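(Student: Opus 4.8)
The plan is to split the statement into three parts: the algebraic claim (existence and uniqueness of $m_1^+$ and of the associated $c_{N,ref}$), the dynamical claim (stability), and the resulting reformulation of the tracking objective into a concentration-tracking $L^2$ problem.

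First I would derive the equilibrium equation. Imposing $\dot F=0$ in the Hill dynamics \eqref{eq:dotF} gives $m_2\,F_{ref}=A\,m_1$; substituting $m_2=1/(\tau_1+\tau_2 m_1)$ from \eqref{eq:m1m2} and clearing the denominator yields exactly $A\tau_2 m_1^2+A\tau_1 m_1-F_{ref}=0$, a quadratic in $m_1$. Since $A,\tau_1,\tau_2>0$ and $F_{ref}>0$, its discriminant $A^2\tau_1^2+4A\tau_2 F_{ref}$ is positive and the product of its two roots equals $-F_{ref}/(A\tau_2)<0$, so the roots have opposite signs and there is a single positive root
\[
m_1^+=\frac{-A\tau_1+\sqrt{A^2\tau_1^2+4A\tau_2 F_{ref}}}{2A\tau_2}.
\]
Inverting the Michaelis--Menten--Hill relation $m_1=c_N/(K_m+c_N)$ then gives $c_{N,ref}=K_m\,m_1^+/(1-m_1^+)$, well defined and positive provided $m_1^+\in(0,1)$. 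Since the quadratic is increasing for $m_1>0$ (its vertex lies at $-\tau_1/(2\tau_2)<0$), the condition $m_1^+<1$ is equivalent to $F_{ref}<A(\tau_1+\tau_2)$, which holds because $F_{ref}$ is chosen as $F_{\max}/k$ with $k>1$.

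Next I would establish stability. Freezing $c_N$ at the constant value $c_{N,ref}$ freezes $m_1,m_2$ at the constants $m_1^+$ and $m_2^+=1/(\tau_1+\tau_2 m_1^+)$, so \eqref{eq:dotF} reduces to the scalar linear system $\dot F=-m_2^+F+A\,m_1^+$ whose unique equilibrium is $F=A m_1^+/m_2^+=A m_1^+(\tau_1+\tau_2 m_1^+)=F_{ref}$ by the defining equation. As $m_2^+>0$, this equilibrium is exponentially stable, which proves the root is stable. In the genuine time-varying regime the damping $m_2(t)=1/(\tau_1+\tau_2 m_1(t))$ is bounded below by $1/(\tau_1+\tau_2)$ since $m_1\in[0,1)$; the force subsystem is therefore uniformly exponentially stable when $c_N$ is treated as an exogenous input, so driving $c_N$ toward $c_{N,ref}$ drives $F$ toward $F_{ref}$ through an input-to-state type estimate.

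Finally, the reformulation follows: by the uniform stability just obtained, tracking the reference force $F_{ref}$ is achieved by tracking the reference concentration $c_{N,ref}$. Since $c_N$ admits the explicit closed form of Proposition \ref{prop:cN}, far cheaper to evaluate than the force, which requires integrating \eqref{eq:Fvrai}, one replaces the force $L^2$ cost by $\int_0^T|c_N(t)-c_{N,ref}|^2\,\dd t$, the problem already treated in Section \ref{sec5}. I expect the main obstacle to be making the stability claim precise: the cleanest route is the frozen-parameter linear analysis above for the equilibrium itself, upgraded to the uniform lower bound on $m_2$ to legitimise the passage from force tracking to concentration tracking.
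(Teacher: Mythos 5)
Your proof is correct and takes essentially the same route as the paper's: set $\dot F=0$, use the one-to-one map $c_N\mapsto m_1$ to treat $m_1$ as an accessory control, reduce to the quadratic $A\tau_2m_1^2+A\tau_1m_1-F_{ref}=0$ whose roots have opposite signs, and obtain stability from the negative eigenvalue $-m_2(c_{N,ref})$ of the frozen linear force dynamics. Your additions are sound refinements of points the paper leaves implicit: the explicit sign analysis of the roots, the check that $m_1^+<1$ so that $c_{N,ref}=K_m\,m_1^+/(1-m_1^+)$ is actually well defined (your condition $F_{ref}<A(\tau_1+\tau_2)$ does hold, since by Proposition \ref{prop:Fs} every achievable force is bounded by $A(\tau_1+\tau_2)$), and the uniform bound $m_2>1/(\tau_1+\tau_2)$ that upgrades the pointwise eigenvalue argument to a justification for replacing force tracking by concentration tracking.
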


\begin{pf} 
Note that the mapping $m_1$: $c_N \mapsto m_1(c_N)$ is one-to-one and $m_1$ can be taken as an accessory control 
in place of $c_N$. Solving in $m_1$ the equation $\dot{F}=0$ leads to real roots denoted respectively $m_1^+>0$ 
and $m_1^-<0$. Taking $m_1^+$, stability is granted since $\lambda=-m_2(c_N^+)$ is negative, 
where $c_N^+$ is given by $m_1(c_N^+)=m_1^+$.  $\hfill \qed$
\end{pf}

\begin{rem}
The optimization problem $\min_{\sigma} \ \int_0^T |c_N(t)-c_{N,ref}|^2\, \dd t$ 
can be efficiently solved using 
the piecewise constant approximation of $c_N$ of Proposition \ref{prop:cnk}.
Indeed, we have:
\begin{equation*}
 \int_0^T |c_N(t)-c_{N,ref}|^2\, \dd t
\approx
\sum_{i=0}^n (\bar c_N^i - c_{N,ref})^2 (t_{i+1}-t_i).
\end{equation*}
\end{rem}

\section{Conclusion}
In this short article, we have mainly presented a finite dimensional approximation of the muscular force response
to FES-input exploiting the mathematical structure of the model.
The construction is based on the Ding et al. model but can be adapted to deal with the different models
discussed in \cite{wilson2011}.
We have presented one application of our study related to motion planning in the isometric case in view 
to design a smart electrostimulator, which is an ongoing industrial project.
Our approximation can be used to parameters estimation \cite{wilson2011,Stein2013} and to design
MPC-optimized sampled-data control schemes, applying standard algorithms \cite{Richalet1993,Boyd2010}
to this situation.\\
Another application of our study is to track in the non-isometric case a path in the joint angle variable 
and this will be developed in a forthcoming article.

\begin{table*}
  \renewcommand{\arraystretch}{0.9}
  \centering
  \caption{List of variables and values of the constant parameters in the Ding et al.\ model}\label{tb:params}
  \setlength{\tabcolsep}{2pt}
  \begin{tabular}{llll}
    Symbol & Unit & Value & Description \\ 
    \hline
    $C_{N}$&~---~ &~---~~&Normalized amount of $Ca^{2+}$-troponin complex\\ 
    $F$& $\text{kN}$ &~---~&Force generated by muscle\\ 
    $t_{i}$&$s$&~---~&Time of the $i^{th}$ pulse\\
    $n$&---&~---~&Total number of the pulses before time $t$\\
    $i$&---&~---~&Stimulation pulse index\\
    $\tau_{c}$&$s$&$0.02$&Time constant that commands the rise and the decay of $C_{N}$\\
    $\bar{R}$&---&$1.143$&Term of the enhancement in $C_{N}$ from successive stimuli\\
    $A$&$\text{kN}\cdot s^{-1}$&~---~&Scaling factor for the force and the shortening velocity of muscle \\
    $\tau_{1}$&$s$&$50.95$ ~&Force decline time constant when strongly bound cross-bridges absent\\
    $\tau_{2}$&$s$&$0.1244$&Force decline time constant due to friction between actin and myosin\\
    $K_{m}$&---&~---~&Sensitivity of strongly bound cross-bridges to $C_{N}$\\
    $A_{\text{rest}}$&$\text{kN}\cdot s^{-1}$&$3.009$&Value of the parameter $A$ when muscle is not fatigued\\
    $\alpha_{A}$&$s^{-2}$&$-4.0~10^{-1}$&Coefficient for the force-model parameter $A$ in the fatigue model\\
    $\tau_{fat}$&$s$&$127$&Time constant controlling the recovery of $A$\\
    \hline
  \end{tabular}
\end{table*}

\begin{figure*}
  \centering
  \includegraphics[width=1.05\linewidth]{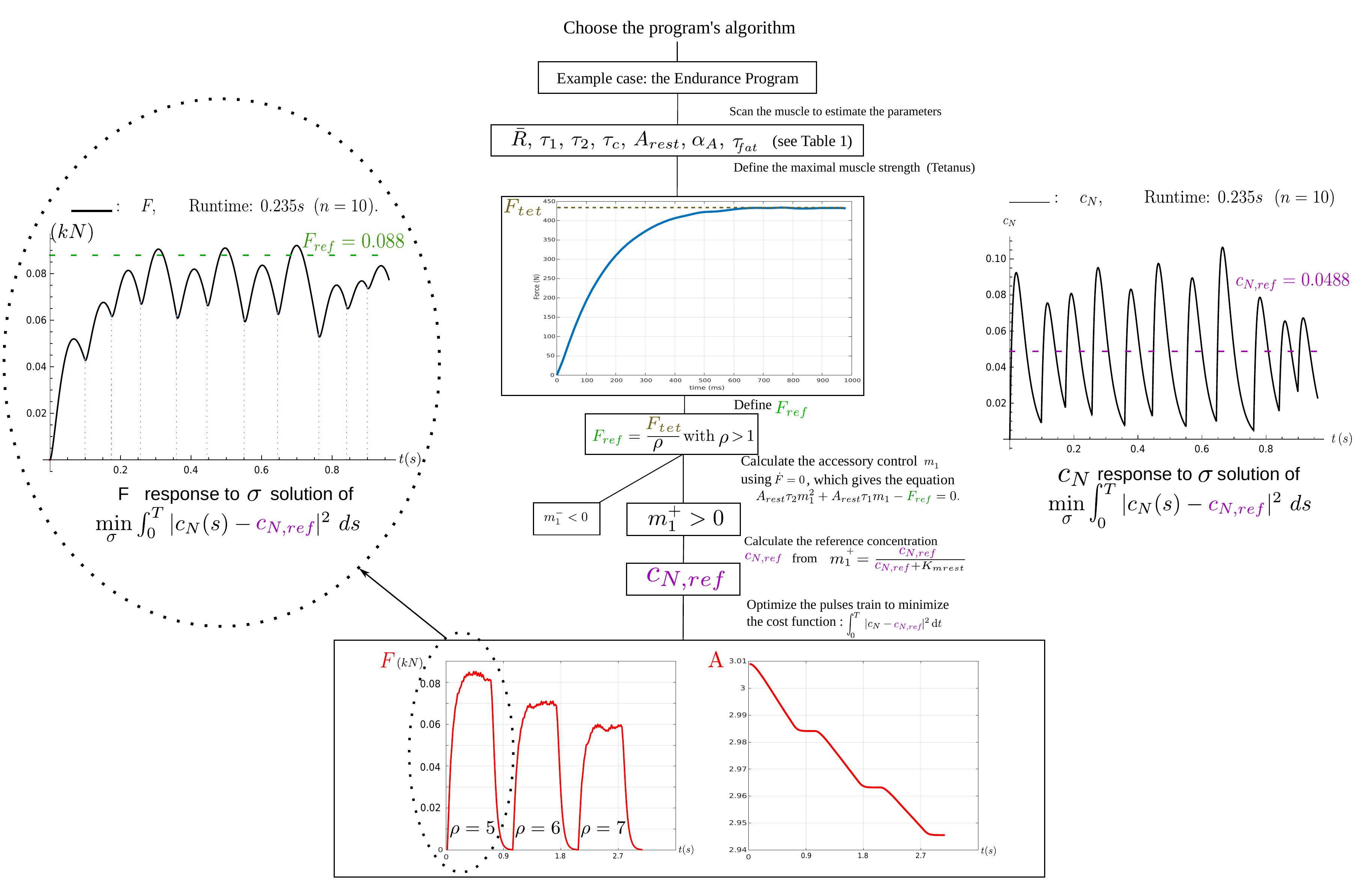}
  \caption{Endurance program for a smart electrostimulator.}
  \label{fig:schema}
\end{figure*}

\FloatBarrier

{\footnotesize

}

\end{document}